\crefname{hypothesis}{Hypothesis}{Hypotheses}
\title{A parameterized Wasserstein Hamiltonian flow approach for solving the Schr\"odinger equation
}
\author{Hao Wu\thanks{Charlotte, NC, USA
 (\email{hwu406@gmail.com}).}
 \and
 Shu Liu\thanks{Department of Mathematics, University of California, Los Angles, CA, USA(\email{shuliu@math.ucla.edu}).}
 \and 
 Xiaojing Ye\thanks{Department of Mathematics and Statistics, Georgia State University, Atlanta, GA, USA(\email{xye@gsu.edu}).}
 \and
 Haomin Zhou\thanks{School of Mathematics, Georgia Institute of Technology, Atlanta, GA, USA(\email{hmzhou@gatech.edu}).}
 }
\newcommand*{\addFileDependency}[1]{
  \typeout{(#1)}
  \@addtofilelist{#1}
  \IfFileExists{#1}{}{\typeout{No file #1.}}
}
\newcommand*{\myexternaldocument}[1]{%
    \externaldocument{#1}%
    \addFileDependency{#1.tex}%
    \addFileDependency{#1.aux}%
}
\newcommand{\deltaT}{\frac{\delta}{\delta T}}
\newcommand{\deltarho}{\frac{\delta}{\delta \rho}}
\newcommand{\STinv}{S \circ T^{-1}}
\newcommand{\TSeps}{T_{\varepsilon}^S}
\newcommand{\rhoeps}{\rho_{\varepsilon}}
\newcommand{\Fcal}{\mathcal{F}}
\begin{document}

\maketitle

\begin{abstract} In this paper, we propose a new method to compute the solution of time-dependent Schr\"odinger equation (TDSE). Using push-forward maps and Wasserstein Hamiltonian flow, we reformulate the TDSE as a Hamiltonian system in terms of push-forward maps. The new formulation can be viewed as a generative model in the Wasserstein space, which is a manifold of probability density functions. Then we parameterize the push-forward maps by reduce-order models such as neural networks. This induces a new metric in the parameter space by pulling back the Wasserstein metric on density manifold, which further results in a system of ordinary differential equations (ODEs) for the parameters of the reduce-order model. Leveraging the computational techniques from deep learning, such as Neural ODE, we design an algorithm to solve the TDSE in the parameterized push-forward map space, which provides an alternative approach with the potential to scale up to high-dimensional problems. Several numerical examples are presented to demonstrate the performance of this algorithm. 
\end{abstract}

\begin{keywords}
  Wasserstein Hamiltonian flow; generative model; Schr\"odinger equation;
\end{keywords}

\section{Introduction}
Schr\"odinger equation plays a fundamental role in the study of quantum physics. In this paper, we are concerned with its numerical simulation. To better explain our objectives and ideas, we take the following nonlinear time-dependent Schr\"odinger equation (TDSE) as an example:
\begin{align}
    \label{def: TDSE}
    i\frac{\partial}{\partial t}\psi (t, x)=-\frac{1}{2}\Delta \psi(t, x) + \frac{\delta}{\delta \rho}\mathcal{F}_R(|\psi(t, x)|^2, x)\cdot \psi(t, x),
\end{align}
where $\psi$ is a complex-valued function defined on $[0,T] \times \mathbb{R}^d$, $\rho=|\psi|^2$, which can be viewed as a probability density function associated with $\psi$, $\mathcal{F}_R$ is a functional of $\rho$, and $\frac{\delta}{\delta \rho}\mathcal{F}_R$ is the $L^2$ first variation of $\mathcal{F}_R$.

With different choices of $\mathcal{F}_R$, the TDSE \eqref{def: TDSE} models various physical problems, for example, quantum harmonic oscillator \cite{dekker1981classical}, many particle interaction systems \cite{Dirac1929QuantumMO}, and Bose-Einstein condensation \cite{doi:10.1126/science.269.5221.198, alma991012567879703276}, among many others. There exists an extensive literature with many remarkable advancements on the theory, computation, and application of \eqref{def: TDSE}. However, its direct numerical simulation remains a difficult task, especially when the dimension $d$ is high, e.g., $d \ge 4$. To mitigate the computational challenges, we introduce a novel formulation that takes advantage of the most recent developments in generative models \cite{goodfellow2014generative} from machine learning and Wasserstein Hamiltonian flow (WHF) \cite{chow2020wasserstein} related to optimal transport theory \cite{villani2009optimal}. More specifically, there are two main objectives in this paper:
\begin{enumerate}
    \item Reformulate the TDSE \eqref{def: TDSE}, via the Madelung transform, as a generative model using push-forward maps in conjunction with a WHF. 
    \item Propose a numerical method that applies parameterized reduced-order models, such as deep neural networks (DNNs), to solve the generative model. The result is a system of ordinary differential equations (ODEs) in the parameter space, which can be solved by symplectic numerical schemes. 
\end{enumerate}

Unlike the existing formulations of TDSEs, the derived generative model provides a dynamical description of the push-forward maps. The induced numerical method is complementary to existing approaches and can be used as an alternative algorithm that is cost-efficient for high-dimensional simulations.

In Section \ref{sec: relatedwork}, we give a brief discussion of related work in the literature. We provide concise introductions to WHF and generative models, two crucial tools used in this investigation, in Section \ref{sec: tools}. The reformulation of TDSE into a generative model is presented in Section \ref{sec: TDSE}. The numerical method based on the formation of neural networks is introduced in Sections \ref{sec: parameterized ge} and \ref{sec: numerical method}. We illustrate the performance of the method by several examples in Section \ref{sec:example}, followed by a short discussion to conclude the paper. 




\section{Related work} \label{sec: relatedwork}
Numerical simulations of TDSE have been conducted extensively with numerous algorithms based on classical methods such as finite difference \cite{ doi:10.1137/S1064827594277053, 10.1119/1.1973991, PhysRevB.43.6760, 10.1063/1.1753661}, spectral method 
\cite{WeizhuB, doi:10.1137/S1064827501393253}, and level set methods \cite{jin2005computing}. A survey of different numerical schemes with 
comprehensive comparisons on their properties can be found in \cite{antoine2013computational}. Classical methods can provide efficient and accurate solutions when the dimension is small, i.e. $d \leq 3$. However, they suffer a serious issue known as the curse of dimensionality, referring to the exponential growth in the computational cost with respect to the dimension $d$, especially when $d$ is large. Compared to classical methods, particle dynamics-based simulations such as the quantum trajectory method (QTM) \cite{10.1063/1.1753661} and smooth particle hydrodynamics (SPH) \cite{PhysRevE.91.053304} have been proposed. They are Monte Carlo approaches designed using the formulation of Bohmian mechanics \cite{Berndl1995OnTG,durr2009bohmian,PhysRevLett.86.3215}, which is a major source of motivations for our investigation. For example, the SPH generates samples from the initial distribution and simulates the sample dynamics under the quantum potential. These sample-based approaches scale well with dimensions, but the approximation to the density function as well as its gradient evaluation remain a major challenge in the computation. 

In recent years, machine learning-based methods, such as the physcis-informed neural network (PINN) \cite{raissi2019physics}, deep Ritz method (DRM) \cite{e2018deep}, weak adversarial network (WAN) \cite{zang2020weak}, and many others \cite{han2017deep, ruthotto2020machine} have shown promising results of using neural networks to solve partial differential equations (PDEs) in high dimensions. Some of them have been adopted to solve Schr\"odinger equation. For example, PINN has been used in computing the solution of TDSE \cite{Pu2021SolvingLW, Wu2022PhysicsIR}. A specific class of neural networks has been proposed to represent the many-electron wave function following the Pauli exclusion principle \cite{HAN2019108929, hermann2020deep, pfau2020ab}, which shows powerful and promising results for estimating ground state energies in many-electron systems. More research has been reported on the application of neural networks to simulate many body problems \cite{doi:10.1126/science.aag2302} and molecular dynamics \cite{lu202186}. Those studies are among the other sources that motivated this work.

Recently, a method called Deep Stochastic Mechanics (DSM) \cite{orlova2023deep}, which modifies the formulations in \cite{nelson1966derivation} and \cite{guerra1983quantization}, was developed to generate samples following the time-evolving squared magnitude (density) of the solution to TDSE. While Madelung transform induces the coupled evolution PDEs of the phase and logarithm of density, the gradients of them satisfy a new coupled PDEs, which are solved by the PINN approach in DSM. These gradients can add up to form the drift of a stochastic differential equation (SDE), whose sample trajectories follow the density of TDSE. In contrast, we directly use the reformulation based on the Madelung transform and approximate the push-forward map by solving an ODE system which is free of network training.

Our formulation and algorithm are directly inspired by recent advancements in parameterized WHF \cite{wu2025parameterized}, in which a numerical method based on neural network parameterization has been proposed for WHF in conjunction with generative models. It is known that, by using Madelung transform, the TDSE can be rewritten as a continuity equation coupled with a Hamilton-Jacobi equation, which forms a WHF. Hence, we can apply the parameterized WHF to solve the TDSE, providing substantial new improvements to handle the Fisher information that appeared in the Hamilton-Jacobi equation. To this end, we borrow a tool from generative models called Neural ODE \cite{chen2018neural} that can compute Fisher information efficiently.

\section{Mathematical Tools} \label{sec: tools} In this section, we give concise introductions to the WHF and Neural ODE, which are two main tools used to establish the reformulation of TDSE as generative models. 

\subsection{Wasserstein Hamiltonian Flow} Let $M$ be a smooth manifold without boundary. For simplicity, we can assume $M = \mathbb{R}^d$ in our discussion. We consider the set of density functions defined on $M$ with bounded second moment:
\begin{align}
    \label{def: prob manifold}
    \mathcal{P}(M)=\cbr[2]{\rho\in C^{\infty}(M)\, : \, \rho\geq 0,\ \int_M\rho  \,dx=1,\ \int_M |x|^2\rho \,dx < \infty }.
\end{align}


Given a Hamiltonian $\mathcal{H}(\rho, \Phi)= \frac{1}{2}\int_M |\nabla \Phi|^2\rho(x)dx+\mathcal{F}(\rho)$ where $\Phi \in C^{\infty}$ and $\mathcal{F}$ is an energy functional on $\mathcal{P}(M)$, the WHF with Hamiltonian $\mathcal{H}$ is described by the following system,
\begin{subequations}
\label{def: whf}\begin{align}
    &\partial_t\rho = \frac{\delta}{\delta\Phi}\mathcal{H}(\rho, \Phi),\\
    &\partial_t\Phi = -\frac{\delta}{\delta\rho}\mathcal{H}(\rho, \Phi),
\end{align}
\end{subequations}
with initial values
\begin{equation}
    \label{eq:WHF-init}
    \rho(0, x)=\rho_0(x) \qquad \mbox{and} \qquad \Phi(0, x)=\Phi_0(x).
\end{equation}

Many classical PDEs can be formulated as WHF with different energy functionals $\mathcal{F}$ \cite{chow2020wasserstein}. The
Schr\"odinger equation is among them. More details are given next.  



\subsection{Schr\"odinger equation and Madelung transform} 
Consider the Madelung transform 
\begin{align}
    \label{def: madelung transform}
    \psi(t, x)=\sqrt{\rho(t, x)}e^{i\Phi(t, x)}, 
\end{align}
which is a nonlinear symplectic transform mapping a complex-valued wave function to the pair of real-valued functions $(\rho, \Phi)$, see \cite{reddiger2023towards} for more details. Using the Madelung transform, the Schr\"odinger equation \eqref{def: TDSE} can be reformulated as the following Madelung system by plugging \eqref{def: madelung transform} into \eqref{def: TDSE} and matching the real and imaginary parts on its two sides:
\begin{subequations}
\label{eq: madelung}
\begin{align}
&\partial_t \rho + \nabla\cdot(\rho\nabla\Phi) = 0, \\ 
& \partial_t \Phi +\frac{1}{2}|\nabla\Phi|^2 = - \frac{\delta}{\delta\rho}\mathcal F_R(\rho) + \frac12\frac{\Delta\sqrt{\rho}}{\sqrt{\rho}}, 
\end{align}
\end{subequations}
with initial values $\rho(0,x) = |\psi(0, x)|^2$ and $\Phi(0,x)$ being the phase of $\psi(0,x)$ at every $x \in M$.
Due to the Madelung transform, we assume $\rho>0$ almost everywhere in $M$ for any time $t$ hereafter.

\begin{remark}
  We always assume that $\Phi$ in \eqref{eq: madelung} is single-valued, thereby eliminating the potential inequivalence between the Schrödinger equation and the Madelung system \cite{wallstrom1994inequivalence}. For more detailed discussions, we refer the reader to \cite{wallstrom1994inequivalence,orlova2023deep} and the references therein.
\end{remark}

From the perspective of Lagrangian mechanics on the density manifold $\mathcal P(M)$, the Schr\"odinger equation arises as the critical point of an action functional \cite{lafferty1988density}. Furthermore, an optimal transport approach was proposed in \cite{von_Renesse_2012} to interpret the Madelung system \eqref{eq: madelung} as the Hamiltonian flow \eqref{def: whf} associated with the following Hamiltonian:
\begin{align}
    \label{def: se hamiltonian}
    \mathcal{H}(\rho, \Phi) = \int_{\mathbb{R}^d} \frac{1}{2}|\nabla \Phi(x)|^2\rho(x) dx+\mathcal{F}_R(\rho)+\frac{1}{8}\mathcal{F}_Q(\rho),
\end{align}
where the last term $\mathcal{F}_Q=\int_{\mathbb{R}^d}\lvert \nabla \log\ \rho(x)\rvert^2\rho(x)dx$ is known as the Fisher information in the context of quantum mechanics \cite{reginatto1998derivation}, and its $L^2$ first variation is the quantum potential \cite{bohm1952suggested12}:
\begin{align}
    \label{eq: fi variation}
    \frac{\delta}{\delta\rho}\mathcal{F}_Q(\rho, x)=-2\Delta \log \rho-\lvert \nabla \log \rho\rvert^2 = - 4\frac{\Delta\sqrt{\rho}}{\sqrt{\rho}}. 
\end{align}
Our reformulation of \eqref{def: TDSE} as a generative model is based on its WHF formulation \eqref{eq: madelung}. 

\subsection{Neural ODE} Generative models in machine learning include a large class of algorithms that aim to map samples from an initial distribution with density $\lambda$ to samples of a target distribution with density $\mu$. It is often easy to obtain samples from the initial distribution such as Gaussian while difficult to sample from the target one, about which only partial information is accessible. Well-known examples of generative models include generative adversarial nets (GAN) \cite{goodfellow2014generative}, Wasserstein GAN (WGAN) \cite{arjovsky2017wasserstein}, and diffusion generative model \cite{song2021score}. A key component of a generative model is a push-forward map $T:\mathbb{R}^d \rightarrow \mathbb{R}^d$ defining a correspondence between samples $z \rightarrow T(z)$ where $z$ is sampled from $\lambda$, denoted by $x \sim \lambda$. The map $T$ induces a push-forward distribution with density $T_\sharp \lambda$ through
\begin{align*}
    \int_E T_{\sharp}\lambda(x)\,dx = \int_{T^{-1}(E)} \lambda(z) \, dz \quad \textrm{ for any measurable set\ } E\subset \mathbb{R}^d,
\end{align*}
where $T^{-1}(E)$ is the pre-image of $E$.
Correspondingly, we also have the change of variable formula to explicitly evaluate the induced push-forward density:
\begin{align}
    \label{eq: change of vari}
    T_{\sharp}\lambda(z)=\lambda\circ T^{-1}(z)\,\textrm{det} \Big(\frac{d}{dz}T^{-1}(z) \Big)\quad \forall z\in \mathbb{R}^d.
\end{align}
In a generative model, $T$ is realized by a neural network and it is desirable to achieve $T_\sharp \lambda = \mu$. 

To reformulate the Schr\"odinger equation \eqref{def: TDSE} in a generative model, we select a special type of neural network called 
Neural Ordinary Differential Equations (Neural ODE) as the push-forward map $T$. Neural ODE was first proposed in \cite{chen2018neural}. It is defined through the solution map of a parameterized ODE. More precisely, let $[0, t^*]$ be a fixed time interval and  $f_{\theta}: \mathbb{R}^d\rightarrow\mathbb{R}^d$ be a neural network with parameters $\theta\in \Theta \subset \mathbb{R}^{m}$ ($m$ is the number of parameters in $f_{\theta}$), the push-forward map given by the neural ODE is defined as $T_\theta(z)=w(t^*)$, where $w(t)$ satisfies
\begin{equation}
\label{def: node forward}
\dot w(t) =f_{\theta}(w(t)),\quad w(0)=z, 
\end{equation}

It is shown in numerous studies that Neural ODE has many desirable properties like strong approximation power and efficient implementations. More importantly, we select Neural ODE because it is easily invertible due to the uniqueness of the solution for the ODE \eqref{def: node forward}, and the logarithmic density function can be conveniently evaluated through the adjoint equation \cite{chen2018neural}. Both features are crucial for us to express \eqref{def: TDSE} as a continuous-time push-forward map and compute the Fisher information term appeared in  \eqref{eq: fi variation}. A brief discussion on how to implement the Fisher information term through Neural ODE is provided in Appendix \ref{sec: evaluation of fi}.


\section{Expressing the Schr\"odinger equation by push-forward map} \label{sec: TDSE}
In this section, we present the reformulation of the TDSE \eqref{def: TDSE} in terms of time-dependent push-forward map $T$. 


Let $\lambda >0$ be the density of a reference distribution on $\mathbb{R}^d$. We define \begin{align}
    \mathcal{O}=\{T\in L^2(\mathbb{R}^d;\mathbb{R}^d, \lambda): ~T\textrm{ is diffeomorphism from }\mathbb{R}^d\textrm{ to }\mathbb{R}^d.\}
\end{align} 
as the space of diffeomorphisms. We denote $\mathcal{T}_T\mathcal{O}:=C^\infty(\mathbb{R}^d; \mathbb{R}^d) \bigcap L^2(\mathbb{R}^d;\mathbb{R}^d, \lambda)$ as its tangent space at $T$. For any $T\in \mathcal{O}$ and $\sigma_i \in \mathcal{T}_T\mathcal{O}$ ($i=1,2$), we define a metric $\mathcal{G}$ on the tangent bundle $\mathcal{T}\mathcal O$ as
\begin{align}
    \label{def: op metric}
    \mathcal{G}(T)(\sigma_1, \sigma_2)=\int \sigma_1(z)^\top \sigma_2(z)\lambda(z)~dz. 
\end{align}
Then, we introduce a Lagrangian $\mathbb{L}$ defined on $\mathcal T \mathcal O$ as
\begin{align} \label{def: Lagrangian}
    \mathbb L(T, \sigma)=\frac{1}{2}\mathcal{G}(T)(\sigma, \sigma)-\mathcal{F}(T_{\sharp}\lambda).
\end{align}
Now, for a time-dependent curve $\{T_t\}$ ($0\leq t\leq t_0$) on $\mathcal{O}$, we denote $\dot T_t = \frac{d T_t}{dt} \in \mathcal{T}_{T_t}\mathcal{O}$ as the time derivative of $T_t$ at $t$, and denote $\mathcal{F}(\rho) = \mathcal{F}_R (\rho)+ \frac{1}{8}\mathcal{F}_Q (\rho)$. In order to introduce the Hamiltonian flow on $\mathcal O$, we first consider the stationary point associated with the following action functional $\mathcal I:\mathcal O_{0, t_0} \rightarrow \mathbb{R}$
\begin{align}
    \label{def: op vari}
    \mathcal{I}(\{T_t\}) = \int_0^{t_0} \mathbb{L}(T_t, \dot T_t)\,dt, \quad \{T_t\} \in \mathcal O_{0, t_0}, 
\end{align}
where we define $\mathcal O_{0, t_0}$ as the set of smooth paths $\{T_t\}_{0\leq t \leq t_0}$ on $\mathcal O$ with $T_{0 \sharp}\lambda = \rho_0$ and $T_{t_0 \sharp}\lambda = \rho_{t_0}$. Here, $\rho_0, \rho_{t_0}$ are the density functions obtained by the Madelung transform \eqref{eq: madelung} from the wavefunction $\psi(t,x)$ at $t=0$ and $t=t_0$ respectively. The stationary point associated with \eqref{def: op vari}, if it exists, satisfies the Euler–Lagrange equation, as stated in the following theorem. For brevity, we omit the subscript \( t \) and use \( T \) and \( \dot{T} \) to denote \( T_t \) and \( \dot{T}_t \), respectively, in the following discussion. 

\begin{theorem}[Hamiltonian system in the space of diffeomorphisms]    \label{theorem: whf in push forward space} The critical point of \eqref{def: op vari}, if exists, satisfies
    \begin{align}
    \label{def: op el}
        \ddot T(z)=-\nabla_X\frac{\delta}{\delta \rho}\mathcal{F}(T_{\sharp}\lambda(\cdot), \cdot)\circ T(z). 
    \end{align}
    Equivalently, by introducing the momentum $\Lambda:=\frac{\partial \mathbb{L}(T, \dot T)}{\partial \dot T} = \lambda\dot{T}$, equation \eqref{def: op el} can also be written as a first order Hamiltonian system
    \begin{equation}
        \label{eq: op hf}
        \begin{split}
            &\frac{d}{dt}T=\frac{1}{\lambda} \Lambda,\\
            &\frac{d}{dt}\Lambda=-\nabla_X\frac{\delta}{\delta \rho}\mathcal{F}(T_{\sharp}\lambda(\cdot), \cdot)\circ T(z) \lambda(z),
        \end{split}
    \end{equation}
    with Hamiltonian $\mathbb{H}(T, \Lambda)= \mathcal{G}(T)(\dot{T},\dot{T}) - \mathbb L(T, \dot T)=\frac{1}{2}\mathcal{G}(T)(\frac{1}{\lambda}\Lambda, \frac{1}{\lambda}\Lambda)+\mathcal{F}(T_{\sharp}\lambda)$.
Furthermore, the push-forward density $T_{\sharp}\lambda$ solves the WHF \eqref{def: whf} whose Hamiltonian is given by \eqref{def: se hamiltonian}.
\end{theorem}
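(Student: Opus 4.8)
The plan is to proceed in three stages: first, derive the Euler--Lagrange equation \eqref{def: op el} by taking a first variation of the action \eqref{def: op vari}; second, pass to the first-order Hamiltonian form \eqref{eq: op hf} by a Legendre transform; and third, show that the push-forward density $\rho_t = T_{t\sharp}\lambda$, together with a suitable potential $\Phi_t$, solves the Madelung system \eqref{eq: madelung}, i.e.\ the WHF \eqref{def: whf} with Hamiltonian \eqref{def: se hamiltonian}. For the first stage I would perturb the path by $T_t \mapsto T_t + \varepsilon\eta_t$ with $\eta_0 = \eta_{t_0} = 0$ and compute $\frac{d}{d\varepsilon}\big|_{\varepsilon=0}\mathcal{I}$. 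The kinetic term $\frac12\int|\dot T_t|^2\lambda\,dz$ contributes $\int \dot T_t^\top\dot\eta_t\,\lambda\,dz$, which after integrating by parts in $t$ becomes $-\int \ddot T_t^\top\eta_t\,\lambda\,dz$. The crucial computation is the variation of $\mathcal{F}(T_\sharp\lambda)$: differentiating the defining identity $\int \phi(x)\rho(x)\,dx = \int \phi(T(z))\lambda(z)\,dz$ with $\phi = \frac{\delta}{\delta\rho}\mathcal{F}$ shows the first variation equals $\int \nabla\big(\tfrac{\delta}{\delta\rho}\mathcal{F}\big)(T(z))^\top\eta(z)\,\lambda(z)\,dz$. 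Setting the total variation to zero for every admissible $\eta_t$ and using $\lambda>0$ yields \eqref{def: op el} pointwise.

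The second stage is routine algebra: with $\Lambda = \lambda\dot T$ the Legendre transform gives $\mathbb{H}(T,\Lambda) = \frac12\int \frac{|\Lambda|^2}{\lambda}\,dz + \mathcal{F}(T_\sharp\lambda)$, matching the stated Hamiltonian. Because the kinetic term is $T$-independent and $\frac{\delta}{\delta T}\mathcal{F}(T_\sharp\lambda) = \lambda\,\nabla\big(\tfrac{\delta}{\delta\rho}\mathcal{F}\big)\circ T$ by the variation computed above, Hamilton's equations reproduce exactly \eqref{eq: op hf}; differentiating the first equation in time recovers the second-order form \eqref{def: op el}, confirming equivalence.

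The third stage is the heart of the statement. I would define the Eulerian velocity $v_t$ by $v_t\circ T_t = \dot T_t$. A standard push-forward computation---differentiate $\int \phi\,\rho_t\,dx = \int \phi\circ T_t\,\lambda\,dz$ in $t$ and integrate by parts---shows that $\rho_t$ satisfies the continuity equation $\partial_t\rho + \nabla\cdot(\rho v) = 0$ for \emph{any} velocity field, giving \eqref{eq: madelung} (a). Writing the material acceleration as $\ddot T_t = \big(\partial_t v + (v\cdot\nabla)v\big)\circ T_t$ turns \eqref{def: op el} into the Eulerian momentum equation $\partial_t v + (v\cdot\nabla)v = -\nabla\frac{\delta}{\delta\rho}\mathcal{F}$. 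The key step is to show $v_t$ is a gradient field: assuming the initial velocity is $v_0 = \nabla\Phi_0$ (consistent with the Madelung phase data), the antisymmetric part $\nabla v - (\nabla v)^\top$ obeys a transport equation obtained by taking the curl of the momentum equation, so a curl-free initial condition is propagated and $v_t = \nabla\Phi_t$ for some $\Phi_t$. Substituting and using $(\nabla\Phi\cdot\nabla)\nabla\Phi = \frac12\nabla|\nabla\Phi|^2$ gives $\nabla\big(\partial_t\Phi + \frac12|\nabla\Phi|^2 + \frac{\delta}{\delta\rho}\mathcal{F}\big)=0$, i.e.\ the Hamilton--Jacobi equation. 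Finally, splitting $\mathcal{F} = \mathcal{F}_R + \frac18\mathcal{F}_Q$ and inserting the Fisher-information variation \eqref{eq: fi variation}, which turns $-\frac18\frac{\delta}{\delta\rho}\mathcal{F}_Q$ into $+\frac12\frac{\Delta\sqrt\rho}{\sqrt\rho}$, reproduces exactly \eqref{eq: madelung} (b), so $(\rho_t,\Phi_t)$ solves the WHF \eqref{def: whf} for the Hamiltonian \eqref{def: se hamiltonian}.

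The main obstacle I anticipate is precisely the gradient (curl-free) structure in the third stage: the Euler--Lagrange dynamics are posed for an arbitrary $\dot T$, whereas the WHF demands the velocity be $\nabla\Phi$, so this property is not automatic and must be secured by the vorticity-transport argument together with the curl-free initial condition supplied by the Madelung data. Secondary care is needed to fix the arbitrary time-dependent constant in $\Phi_t$ (absorbed into the Hamilton--Jacobi normalization) and to justify the two integrations by parts under suitable decay and regularity assumptions on $\lambda$, $T$, and $\frac{\delta}{\delta\rho}\mathcal{F}$.
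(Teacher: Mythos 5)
Your proposal is correct, and the first two stages follow essentially the same route as the paper: the Euler--Lagrange equation with the $T$-independent metric giving $\ddot T\lambda$ on one side, the first variation of $\mathcal{F}(T_\sharp\lambda)$ giving $\lambda\,\nabla\big(\tfrac{\delta}{\delta\rho}\mathcal{F}\big)\circ T$ on the other, and a routine Legendre transform. The one mechanical difference in stage one is how that first variation is computed: the paper perturbs $T$ by flowing along the Eulerian field $S\circ T^{-1}$ for time $\varepsilon$ and invokes the continuity equation $\partial_t\rho+\nabla\cdot(\rho\,S\circ T^{-1})=0$ before integrating by parts in $x$, whereas you differentiate the push-forward identity $\int\phi\,\rho_\varepsilon\,dx=\int\phi\circ(T+\varepsilon\eta)\,\lambda\,dz$ directly with $\phi=\tfrac{\delta}{\delta\rho}\mathcal{F}(\rho_0)$ frozen; the two are equivalent (your route even avoids one integration by parts). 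The substantive divergence is in the final claim that $T_\sharp\lambda$ solves the WHF: the paper simply cites Proposition 2 of the Wasserstein Hamiltonian flow reference, while you supply the full argument --- continuity equation from the push-forward, Eulerian momentum equation via the material derivative, propagation of the curl-free condition from the Madelung initial data, and recovery of the Hamilton--Jacobi equation with the Fisher-information term. Your version is more self-contained and correctly isolates the genuine subtlety (the velocity must remain a gradient, which is an initial-condition-dependent fact, not automatic from the Euler--Lagrange dynamics); the paper's citation buys brevity at the cost of hiding exactly that point.
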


\begin{proof}
    For convenience, we denote $\mathbb{F}(T):=\mathcal{F}(T_\sharp \lambda)$. Following the definition of $\mathbb{L}(T, \dot T)$ in \eqref{def: Lagrangian}, we have
    \begin{align} \label{def: Lagrangian-2}
        \mathbb L(T, \dot T)=\frac{1}{2}\mathcal{G}(T)(\dot T, \dot T)-\mathbb{F}(T),
    \end{align}    

    The Euler-Lagrange equation satisfied by the critical point of \eqref{def: op vari} is
    \begin{align}
    \label{eq: el eq}
        \frac{\delta}{\delta T}\mathbb{L}(T, \dot T)=\frac{d}{dt}\frac{\delta}{\delta \dot T}\mathbb{L}(T, \dot T).
    \end{align}
    Let us first examine its right-hand side. Following the definition given in \eqref{def: op metric}, we note that the metric  $\mathcal{G}$ is the same for all $T$, implying that $\mathcal{G}$ is independent of $T$. This leads to $\frac{\delta}{\delta \dot T}\mathbb{L}(T, \dot T)=\dot T(z)\lambda(z)$. Hence we have 
    \begin{align}
    \label{eq: el eq right}
       \frac{d}{dt}\frac{\delta}{\delta \dot T}\mathbb{L}(T, \dot T) = \ddot{T}(x) \lambda(z).
    \end{align}
    We next compute the left-hand side of \eqref{eq: el eq}. Since the metric is independent of $T$, we have
    $\frac{\delta}{\delta T}\mathbb{L} = -\frac{\delta}{\delta T}\mathbb{F}$. In the following, we calculate $\deltaT \mathbb{F}(T)$.

    Let $S \in \mathcal{O}$ be arbitrary and define $\TSeps(x)$ to be the solution $X(\varepsilon)$ of the ODE: 
    \begin{equation}
    \label{eq:proof-ode}
        \begin{cases}
        X'(t) = (\STinv)(X(t)), \quad 0 \le t \le \varepsilon, \\
        X(0) = x = T(z) \sim T_\sharp \lambda ,
        \end{cases}
    \end{equation}
    at the end time $t = \varepsilon$, where $z \sim \lambda$. In addition, $\TSeps$ is a variation of $T$ equivalent to $T+\varepsilon S$ up to the first order because for any $z$ there are $X(0)=x=T(z)$ and
    \begin{align*}
        X(\varepsilon) 
        & = X(0)+X'(0)\varepsilon + o(\varepsilon) \\
        & = x + (\STinv)(x)\varepsilon + o(\varepsilon) \\
        & = T(z) + \varepsilon S(z) + o(\varepsilon) \\
        & = (T + \varepsilon S)(z) + o(\varepsilon).
    \end{align*} 
    We call $\rho_t$ the density of $X(t)$ for $0 \le t \le \varepsilon$. Then the continuity equation of $\rho_t$ corresponding to \eqref{eq:proof-ode} is
    \begin{equation}
    \label{eq:proof-ct-eq}
        \begin{cases}
        \partial_t \rho_t(x) + \nabla_X \cdot [\rho_t(x) (\STinv)(x)] = 0,\quad 0 \le t \le \varepsilon,\\
        \rho_0 = T_\sharp \lambda.
        \end{cases}
    \end{equation}
    Now we derive the $L^2$ first-variation $\deltaT \mathbb{F}(T)$. We have
    \begin{align*}
        \Big \langle \deltaT \mathbb{F}(T), S \Big\rangle 
        & = \frac{d}{d\varepsilon} \mathbb{F}(\TSeps)\Big|_{\varepsilon = 0}= \frac{d}{d\varepsilon} \Fcal(\TSeps{}_\sharp\lambda )\Big|_{\varepsilon = 0} \\
        & = \lim_{\varepsilon \to 0} \int \deltarho \Fcal(\rho_0)(x) \frac{\rhoeps(x) - \rho_0(x)}{\varepsilon} \,dx \\
        & = \int \deltarho \Fcal (\rho_0) (x)  \partial_t \rho_\varepsilon(x) \Big|_{\varepsilon=0} \,dx.
    \end{align*}
    By the continuity equation \eqref{eq:proof-ct-eq}, we know 
    \[
    \partial_t \rho_\varepsilon(x) \big|_{\varepsilon=0} = - \nabla \cdot [\rho_0 (x) (\STinv)(x)].
    \]
    Therefore
    \begin{align*}
        \Big \langle \deltaT \mathbb{F}(T), S \Big\rangle 
        & = \int \deltarho \Fcal (\rho_0) (x)  \Big(-\nabla_X \cdot [\rho_0 (x) (\STinv)(x)] \Big) \,dx \\
        & = \int \nabla_X \deltarho \Fcal (\rho_0) (x)   (\STinv)(x) \rho_0 (x) \,dx \\
        & = \int \nabla_X \deltarho \Fcal (T_\sharp \lambda) (T(z)) S(z) \lambda (z) \,dz,
    \end{align*}
    where the last equation is due to the change of variable $x = T(z)$.
    Since $S$ is arbitrary, we must have
    \[
    \deltaT \mathbb{F}(T) = \nabla_X \deltarho \Fcal (T_\sharp \lambda) (T(z)) \lambda (z).
    \]   

This leads to 
        \begin{align*}
        \frac{\delta}{\delta 
 T}\mathbb{L}(T, \dot T)&=-\frac{\delta}{\delta T}\mathbb{F}(T)=-\nabla_X\frac{\delta}{\delta \rho}\mathcal{F}(T_{\sharp}\lambda(\cdot), \cdot)\circ T(z) \lambda(z). 
    \end{align*}
    Hence, the Euler-Lagrange equation \eqref{eq: el eq} becomes
    \begin{align}
        \frac{d}{dt}\left(\dot T(z)\lambda(z)\right)=\nabla_X\frac{\delta}{\delta \rho}\mathcal{F}(T_{\sharp}\lambda(\cdot), \cdot)\circ T(z) \lambda(z) 
    \end{align}
    Since $\lambda(z)>0$ for all $z\in \mathbb{R}^d$, this proves \eqref{def: op el}. 
    
    Following Proposition 2 in \cite{chow2020wasserstein}, we know that the push-forward density $T_{\sharp}\lambda$ satisfies the WHF \eqref{def: whf}, which gives \eqref{eq: op hf}. This completes the proof.
\end{proof}


For convenience, we call the equation \eqref{def: op el} or system \eqref{eq: op hf} the generative model reformulation of TDSE \eqref{def: TDSE}. We would like to mention that the derivation of \eqref{def: op el} is also inspired by the \textit{Bohmian mechanics} \cite{bohm1952suggested12}, 
which aims at describing the particle dynamics governed by the Schr\"odinger equation. More precisely, each particle $\boldsymbol{X}$ moves according to a dynamical system 
\begin{align}
    \label{def: guiding eq}
    \ddot{\boldsymbol{X}}=-\nabla_X\frac{\delta}{\delta\rho}\mathcal{F}(\rho, \boldsymbol{X}),
\end{align}
assisted by the guiding function $\rho$, which is obtained by the Schr\"odinger equation and Madelung transform. Clearly, equations \eqref{def: guiding eq} and \eqref{def: op el} share similarities in their formulation. However, they have significant distinctions. By definitions, we see that the system \eqref{def: op el} is defined in the space of diffeomorphisms $\mathcal{O}$ while the equation \eqref{def: guiding eq} is defined in $\mathbb{R}^n$. More importantly, the operator equation \eqref{def: op el} is self-contained. Its trajectory is fully determined if the initial conditions of $T$ and $\Lambda$ are given. This implies that it can produce particle motions as well as density maps at the same time once $T_t$ is known. In sharp contrast, \eqref{def: guiding eq} is not self-contained. It requires extra knowledge of the guiding function $\rho$ to fully determine the motions of the particles. It is worth noting that investigating the well-posedness of the operator equation \eqref{def: op el} and the Vlasov-type ODE \eqref{def: guiding eq} lies beyond the scope of this study and is left as a potential direction for future research.

To further digest the system \eqref{eq: op hf}, we consider its implication in the \textit{energy eigen-state}. In this case, the generative model \eqref{eq: op hf} reduces to a trivial (constant) solution in the space of diffeomorphisms $\mathcal{O}$. More precisely, at the energy eigen-state $\psi_0(t, x)=e^{-iE_0t}\sqrt{\rho_0(x)}$ with eigen-energy $E_0$, $\phi(x)=\sqrt{\rho_0(x)}$ solves
\begin{align}
    \label{def: TDSE eigen state}
    E_0\phi (x)=-\frac{1}{2}\Delta \phi( x) + \frac{\delta}{\delta \rho}\mathcal{F}_R(|\phi( x)|^2, x)\cdot \phi(x).
\end{align}
By \eqref{eq: fi variation} and assuming $\phi > 0$, we have
\begin{align*}
    \frac{\delta}{\delta \rho }\mathcal{F}_Q(\rho_0, x) &= -2\Delta \log(\phi^2) -\lvert\nabla\log (\phi^2)\rvert^2\\
    &=-4\frac{\phi\Delta \phi-\lvert \nabla\phi\rvert^2}{\phi^2}-4\lvert\frac{\nabla \phi}{\phi}\rvert^2 \\
    &=-4\frac{\Delta\phi}{\phi},
\end{align*}
Together with \eqref{def: TDSE eigen state}, we have 
\begin{align*}
    \frac{\delta}{\delta \rho }\mathcal{F}(\rho_0, x)&=\frac{\delta}{\delta\rho}\mathcal{F}_R(\rho_0, x)+\frac{1}{8}\frac{\delta}{\delta\rho}\mathcal{F}_Q(\rho_0, x)\\
    &=\frac{E_0\phi(x)+\frac{1}{2}\Delta \phi( x)}{\phi}+\frac{1}{8}(-4\frac{\Delta\phi}{\phi})\\
    &=E_0
\end{align*}
Hence \eqref{def: op el} becomes
\begin{align}
    \ddot{\boldsymbol{T}} 
    =-\nabla_XE_0=0.
\end{align}
This corresponds to a trivial solution in the space of diffeomorphisms (constant speed).

\section{Parameterized generative model of TDSE} \label{sec: parameterized ge}
The generative model for TDSE \eqref{eq: op hf} is formulated in the space of diffeomorphisms $\mathcal{O}$ and its tangent bundle, which are infinite dimensional spaces. To simulate its solution, we must approximate it in a finite dimensional space. To this end, we follow the ideas proposed in the parameterized WHF (PWHF) \cite{wu2025parameterized}. 

Instead of arbitrary $T \in \mathcal{O}$, we restrict the consideration in a subspace of $\mathcal{O}$ in which each $T$ can be parameterized by $\theta$, denoted as $T_{\theta}$. There may be different choices for $T_{\theta}$, such as finite element or Fourier approximations. In this study, we choose neural networks, especially the Neural ODEs described in Section \ref{sec: tools}. In other words, $\mathcal{O}$ is replaced by  $\mathcal{O}_\theta=\{T_{\theta}: ~\theta\in \Theta \subset \mathbb{R}^{m} \}$ in the problem setting in Section \ref{sec: TDSE}.  The tangent space of $\mathcal{O}_\theta$ is $\mathcal{T}_{\theta}\mathcal{O}=\text{span}\{\partial_{\theta_k}T_{\theta}:k=1, \cdots, m\}$; the metric \eqref{def: op metric} becomes
\begin{align}
    G(\theta)=\int  \partial_{\theta} T_\theta(z)^{\top} \partial_{\theta} T_\theta(z)\lambda(z)~dz; \label{relaxed metric tensor}
\end{align}
and the Lagrangian in the parameter space $\Theta$ is reformulated to
\begin{equation}
\label{eq:para-Lagrangian}
    L(\theta, \dot\theta) = \frac{1}{2}\dot\theta^\top G(\theta)   \dot\theta-F(\theta),
\end{equation}
where $F(\theta):=\mathcal{F}(\rho_{\theta})$. Hence the variational formulation \eqref{def: op vari} is reduced to 
\begin{align}
    \label{vari exact PWHF}
    \mathcal{I}^\Theta (\theta)= \inf_{\theta}\cbr[2]{\int_0^{t_0} L(\theta(t), \dot{\theta}(t))dt: \rho_{\theta(0)}=\rho_0, \rho_{\theta(t_0)}=\rho_{t_0} }.
\end{align}

Mimicking the derivation of PWHF as detailed in \cite{wu2025parameterized}, we obtain the parameterized generative model for TDSE \eqref{eq: op hf}.
\begin{proposition} The critical point of $\mathcal{I}^\Theta (\theta)$ defined in \eqref{vari exact PWHF} satisfies a Hamiltonian system 
  \begin{subequations}
\label{general PWGF pseudo simplified}
    \begin{align}
        \dot\theta &= G^{\dagger}p,\\
        \dot p &= \frac{1}{2}[(G^{\dagger}p)^{\top} (\partial_{\theta_k}G)G^{\dagger}p]_{k=1}^{m}-\nabla_{\theta}F(\theta),
    \end{align}
\end{subequations}
where $G^{\dagger}$ is the Penrose-Moore pseudo inverse of $G$.  The corresponding Hamiltonian is 
\begin{align}
    \label{eq:pseudo-Hamiltonian}
    H(\theta, p)=\frac{1}{2}p^{\top}\widehat{G}^{\dagger}(\theta) p + F(\theta).
\end{align}
\end{proposition}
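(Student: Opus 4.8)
The plan is to treat \eqref{vari exact PWHF} as a classical finite-dimensional variational problem on the parameter manifold $\Theta$, with Lagrangian $L(\theta,\dot\theta)$ given in \eqref{eq:para-Lagrangian}. This Lagrangian has the standard ``kinetic minus potential'' form: the kinetic term is the quadratic form induced by the (possibly degenerate) metric tensor $G(\theta)$ from \eqref{relaxed metric tensor}, and the potential is $F(\theta)$. I would first derive the Euler--Lagrange equation, then pass to the first-order Hamiltonian system through a Legendre transform, handling the degeneracy of $G$ with the Moore--Penrose pseudo-inverse. This mirrors the derivation of the parameterized Wasserstein Hamiltonian flow in \cite{wu2025parameterized}.

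First I would compute the Euler--Lagrange equation $\frac{d}{dt}\frac{\partial L}{\partial\dot\theta} = \frac{\partial L}{\partial\theta}$. Differentiating \eqref{eq:para-Lagrangian} gives $\frac{\partial L}{\partial\dot\theta} = G(\theta)\dot\theta$ and, component-wise, $\frac{\partial L}{\partial\theta_k} = \frac{1}{2}\dot\theta^\top(\partial_{\theta_k}G)\dot\theta - \partial_{\theta_k}F$. Hence the critical point satisfies
\begin{equation*}
\frac{d}{dt}\big(G(\theta)\dot\theta\big) = \Big[\tfrac{1}{2}\dot\theta^\top(\partial_{\theta_k}G)\dot\theta\Big]_{k=1}^m - \nabla_\theta F(\theta).
\end{equation*}
I then introduce the conjugate momentum $p := \partial L/\partial\dot\theta = G(\theta)\dot\theta$, so that $\dot p$ equals the right-hand side above.

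Next I would invert the velocity--momentum relation. Because $G(\theta)$ is a Gram matrix of the tangent fields $\partial_{\theta_k}T_\theta$, it is symmetric positive semidefinite but possibly singular when these fields are linearly dependent (e.g.\ under over-parameterization). The relation $p = G\dot\theta$ forces $p$ to lie in the range of $G$, and on that range the minimum-norm solution is $\dot\theta = G^\dagger p$. Substituting this into the momentum equation yields the second equation of \eqref{general PWGF pseudo simplified} directly, with no need to differentiate the pseudo-inverse. For the Hamiltonian, I would apply the Legendre transform $H = p^\top\dot\theta - L$ and use the identities $G G^\dagger G = G$, $G^\dagger G G^\dagger = G^\dagger$, and the symmetry $(G^\dagger)^\top = G^\dagger$ to reduce $p^\top G^\dagger p - \frac{1}{2}(G^\dagger p)^\top G (G^\dagger p)$ to $\frac{1}{2}p^\top G^\dagger p$, giving the Hamiltonian \eqref{eq:pseudo-Hamiltonian}. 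Verifying $\dot\theta = \partial_p H = G^\dagger p$ (again by symmetry of $G^\dagger$) closes the loop with the first equation.

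The main obstacle is the degeneracy of $G$. When $G$ is singular the Legendre transform is not a bijection between velocities and momenta, so I must argue that the flow is nonetheless well-defined: one checks that $p$ remains in $\operatorname{range}(G)$ along trajectories, so that $G^\dagger p$ recovers a genuine velocity, and that the residual ambiguity in $\dot\theta$ modulo $\ker G$ does not affect the pushed-forward density $\rho_\theta$, which is the only object of physical interest. This degeneracy is precisely why I would read off $\dot p$ from the Euler--Lagrange equation rather than from $-\partial_\theta H$: the latter requires differentiating $G^\dagger$, whose general derivative carries extra projection terms onto $\ker G$ that cancel only when acting on $p \in \operatorname{range}(G)$. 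The remaining steps are the routine matrix-calculus identities above, and consistency with the exact system \eqref{eq: op hf} follows as in \cite{wu2025parameterized}.
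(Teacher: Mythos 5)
Your derivation is correct and is essentially the argument the paper intends: the paper gives no proof of this proposition itself but defers to the PWHF derivation in \cite{wu2025parameterized}, which is exactly the Euler--Lagrange computation for \eqref{eq:para-Lagrangian} followed by the Legendre transform with the Moore--Penrose pseudo-inverse handling the possible degeneracy of $G$. Your added care about $p$ remaining in $\operatorname{range}(G)$ and about reading $\dot p$ from the Euler--Lagrange equation rather than from $-\partial_\theta H$ is consistent with, and slightly more explicit than, the cited treatment.
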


Solving \eqref{general PWGF pseudo simplified} can provide an approximate solution $T_{\theta(t)}$ to the original flow \eqref{def: op el} defined in the space of diffeomorphisms $\mathcal{O}$. This further gives the approximate density $\rho_{\theta} = T_{\theta \sharp}\lambda$ in $\mathcal{P}(M)$. Similarly, we can approximate $\nabla\Phi(t, x)$ by $\partial_{\theta} T_{\theta(t)}\circ T_{\theta(t)}^{-1}(\cdot)p(t)$. Together, they form the foundation for our algorithm to compute the parameterized generative model for TDSE. 

\section{Numerical algorithm}
\label{sec: numerical method}

To simulate the parameterized generative model of TDSE \eqref{general PWGF pseudo simplified}, we suggest the numerical method presented in Aglorithm \ref{alg:HFsolver}, which is based on a semi-implicit symplectic Euler scheme proposed in \cite{wu2025parameterized}. Here we omit its details.

\begin{algorithm}[H]
\caption{Parameterized TDSE solver}
\label{alg:HFsolver}
\begin{algorithmic}\STATE{Initialize neural network $T_{\theta}$ with parameters $\theta^0$ at $t=0$.}
\STATE{Initialize $p^0=\nabla_{\theta}\mathbb{E}_{z\sim\lambda}[\Phi(0, T_{\theta}(z))]$ with $\Phi(0, x)=-i\log \frac{\psi(0, x)}{|\psi(0, x)|}$.}
\STATE{Set terminal time $t_1$, number of steps $K$ and fixed point iteration rate $\gamma$, define the step size as $h=t_1/K$. }
\FOR{$l=0, \cdots, K-1$}
\STATE{Sample $\{z_1, \cdots, z_N\}$ from $\lambda$, and compute $X_i=T_{\theta^l}(z_i)$ as well as $\rho_{\theta^l}(x_i)=T_{\theta^l\sharp} \lambda(x_i)$}
\STATE{Apply MINRES to solve $\xi^{l, 0}$ from equation $G(\theta^{l})\xi =p^l$, set $\alpha^{l, 0}=\theta^l$}
\FOR{$j=1, \cdots, n_{in}$}
\STATE{Update $\alpha^{l, j}= \theta^l+h\xi^{l, j}$}
\STATE{Update $\xi^{l,j+1} = \xi^{l,j} - \gamma (G(\alpha^{l,j}) \xi^{l,j} - p^l)$
}
\ENDFOR
\STATE{Set $\theta^{l+1}=\alpha^{l, n_{in}}, \eta^{l+1}=\xi^{l, n_{in}}$}
\STATE{Compute $X_i^{l+1}=T_{\theta^{l+1}}(z_i)$ as samples from $\rho_{\theta^{l+1}}$, evaluate $\nabla_{\theta}F(\theta^{l+1})$ through samples $\{X_i^{l+1}\}$}  
\STATE{Set $p^{l+1}=p^l+\frac{h}{2}[(\eta^{l+1})^{\top} \partial_{\theta_k}G\eta^{l+1}]_{k=1}^{m}-h\nabla_{\theta}F(\theta^{l+1})$}
\ENDFOR
\STATE{\textbf{Output:} Solution at discrete time spots: $(\theta(lh) := \theta^{l}, p(lh) := p^l)$ for $l=0,\dots,K$. 
}
\end{algorithmic}
\end{algorithm}

Algorithm \ref{alg:HFsolver} has several important features. First, the algorithm does not require spatial discretization or basis, and is sampling-based and can be executed as long as samples $\{z_1, \cdots, z_{N}\}$ from the reference distribution are available. 

This is a particularly important and the main reason that Algorithm \ref{alg:HFsolver} can scale up to high-dimensional cases. 
In practice, samples from the reference distribution can be taken from the same or different sets at each time step. 
Second, the parameters $\theta$ are computed by iterative methods for linear least squares problems. This is different from typical network training where the parameters are solved from challenging large-scale non-convex optimization problems. In particular, Algorithm \ref{alg:HFsolver} can be implemented without explicitly creating the matrix $G$, because it only requires matrix vector multiplication. The minimal residual (MINRES) method is chosen for convenience. It can be replaced by other numerical solvers as long as the least-squares solution for the linear system can be obtained efficiently. 
Third, compared to the algorithm proposed in \cite{wu2025parameterized}, we develop a new approach to evaluate the Fisher information appeared in $F(\theta)$ by leveraging the properties of the Neural ODE $T_{\theta}$. A detailed PyTorch-based algorithm is given in Appendix \ref{sec: evaluation of fi}. 

There are many options to initialize $\theta^0$ in Algorithm \ref{alg:HFsolver} depending on the problem settings. 
We can select $T_{\theta^0}$ the identity map if $\rho_0$ is the same as the reference density, or samples at $t=0$ are given. In the latter case, the algorithm can be carried out without the reference distribution. %
In general, we can initialize $\theta^0$ by minimizing the difference between $\rho_{\theta^0}$ and $\rho_0$, for example,    
$$
\theta^0=\underset{\theta}{\textrm{argmin}} \{\mathcal{D}_{\mathrm{KL}}(\rho_0 \| \rho_{\theta})\},
$$
where $\mathcal{D}_{\mathrm{KL}}$ stands for the Kullback-Leibler divergence between two probability densities. Other distance or divergence can be used as well. 
Once $\rho^0$ is available, $p^0$ can be initialized by $\nabla_{\theta}\mathbb{E}_{z\sim\lambda}[\Phi(0, T_{\theta}(z))]$, where the initial phase $\Phi(0,x)$ is obtained through the Madelung transform.

\section{Numerical results}
\label{sec:example}
In this section, we present three examples to demonstrate the performance of the proposed formulation and algorithm. The first is the quantum harmonic oscillator; the second is the Gross-Pitaevskii equation (GPE); and the third is a three-particle system. We focus on their numerical simulations. Our computation is carried out on a desktop computer with an NVIDIA RTX-4080s GPU (16 GB memory) and CUDA enabled.  

\subsection{Coherent state solution to the quantum harmonic oscillator} 
Although the appearance of the parameterized system \eqref{general PWGF pseudo simplified} is different from the TDSE given in \eqref{def: TDSE}, it can be verified that they are theoretically equivalent in some simplified situations. For example, let us consider the linear Schr\"odinger equation with quadratic potential
\begin{align}
\label{eq: quantum ho}
    &i\partial_t\psi =-\frac{1}{2} \Delta \psi + \frac{x^2}{2}\psi,\ x=(x_1, x_2)\in \mathbb{R}^2,\\
    &\psi(0, x) = \frac{1}{\pi^{1/4}}\exp\left(-\frac{1}{2}\left[(x_1-\sqrt{2})^2 + x_2^2\right]+i\sqrt{2} x_2\right).
\end{align}
This equation \eqref{eq: quantum ho} is known as the quantum harmonic oscillator whose coherent state solution can be verified analytically as 
\begin{align}
\label{eq: coherent state1}
    \psi(t, x)=\frac{1}{\pi^{1/4}}\exp\left(-\frac{1}{2}\lvert x-\sqrt{2}\alpha (t))\rvert^2+i\sqrt{2}\beta(t)^\top x+ic(t)\right),
\end{align}
where $\alpha(t)=(\cos t, \sin t)^\top, \beta(t)=(-\sin t, \cos t)^\top$, and $c(t)$ is a time dependent function satisfying $c(0)=0$.

If we select an affine transformation as the push-forward map, i.e., 
\begin{align}
    \label{def: affine map}
    T_{\theta}(z)=A(t)z+b(t), \quad \text{with parameters} \quad \theta(t)=(A(t), b(t)), \ A(t)\in\mathbb{R}^{d\times d}, \ b(t)\in\mathbb{R}^d,
\end{align}
and set the reference density $\lambda$ as
\begin{align}
    \lambda(z)=\frac{1}{\sqrt{\pi}}\exp\left(-\lvert z\rvert^2\right), 
\end{align}
the system for the parameters \eqref{general PWGF pseudo simplified} becomes 
\begin{subequations}
\label{eq: coherent state para ode1}\begin{align}
    &\ddot A(t)=-\nabla_A\left(\frac{1}{4}\textrm{Tr}\left[ A(t)^\top A(t)+A(t)^{-\top} A(t)^{-1}\right]\right), \quad \ddot b(t)=-\nabla_{b}\left(\frac{1}{2}b(t)^\top b(t)\right),\\
    &A(0)=I_{d\times d},\ \dot A(0)=0_{d\times d}, \quad b(0)=(\sqrt{2}, 0)^\top, \  \dot b(0)=(0, \sqrt{2})^\top.
\end{align}
\end{subequations}

Its solution can be verified as
\begin{align}
\label{eq: para qho sol1}
    A(t)=I_{d\times d}, \quad b(t)=\sqrt{2}\alpha(t).
 \end{align}
Plugging \eqref{eq: para qho sol1} into \eqref{eq: change of vari}, we get the parameterized density function 
\begin{align}
    \rho_{\theta}(x)=\frac{1}{\sqrt{\pi}}\exp\left(-\lvert x-\sqrt{2}\alpha (t)\rvert^2\right),
\end{align}
which equals to the density function given by the true solution \eqref{eq: coherent state1}. 

We use Algorithm \ref{alg:HFsolver} to solve the parameter ODEs \eqref{eq: coherent state para ode1} for $d=2$. A total of $N=20,000$ samples are used in the calculation. The matrix $A(t)$ remains as an identity and the vector $b(t)-b(0)$ is depicted in Figure \ref{fig: QHO}. Clearly, the numerical solution coincides with the exact solution as our analysis suggested in this example.

\begin{figure}[H]
    \centering
    \includegraphics[width=0.5\linewidth]{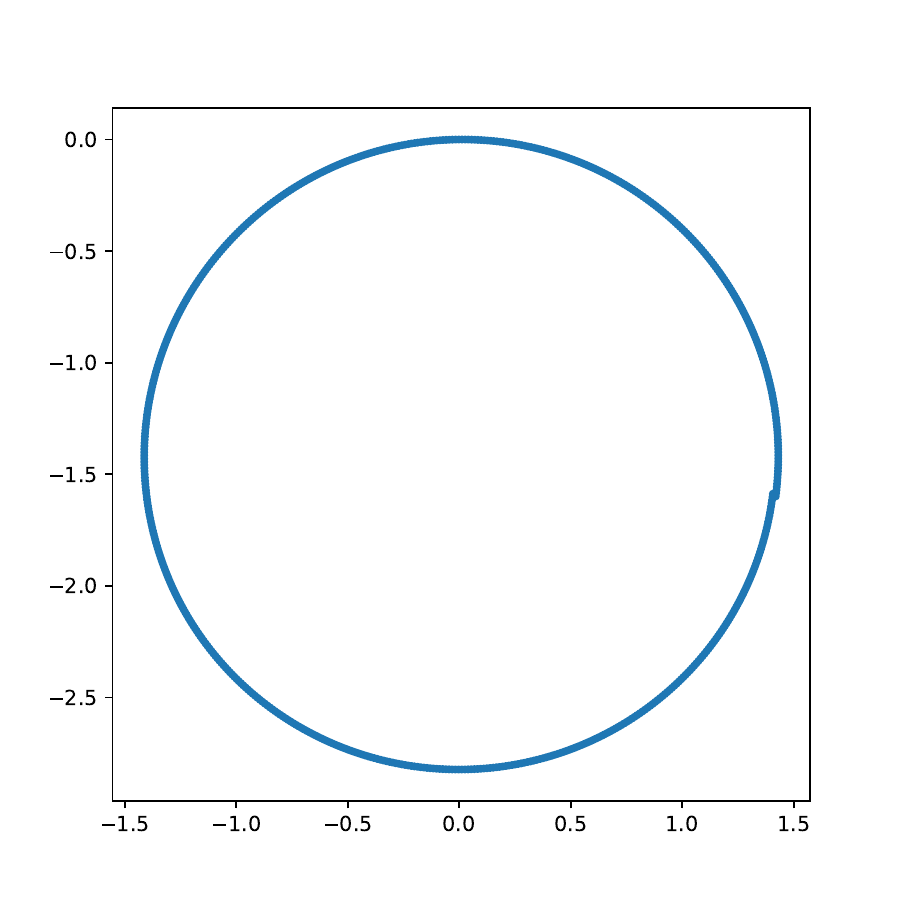}
    \caption{The blue curve shows the trajectory of $b(t)-b(0)$ for the harmonic oscillator with quadratic potential example \eqref{eq: coherent state para ode1}.} 
    \label{fig: QHO}
\end{figure}

\subsection{Gross-Pitaevskii equation}\label{sec: GPE} The Gross-Pitaevskii equation (GPE) is 
\begin{align} \label{def: gpe}
        i\frac{\partial}{\partial t}\psi (t, x)=-\frac{1}{2}\Delta \psi(t, x) + V(x)\psi(t, x)+\gamma |\psi(t, x)|^2\psi(t, x),
\end{align}
where $\gamma\in \mathbb{R}$ is a constant. The GPE is a special case of the TDSE \eqref{def: TDSE} if taking $\mathcal{F}_R(\rho)=\int V(x)~\rho(x)dx+\frac{\gamma}{2}\int \rho(x)^2dx$. When $V(x)\equiv0, \ d=1, \ \gamma=-2$, the GPE \eqref{def: gpe} admits a set of solutions with explicit expression given by 
\begin{align}
    \label{eq: gpe explicit sol}
    \psi(t, x)=\sqrt{\mu/2}~\exp\left(i\left[\frac{1}{2}v x+\frac{1}{2}(\mu -\frac{1}{4}\lvert v\rvert^2)t\right]\right)\textrm{sech}\left(\sqrt{\mu}(x-\frac{1}{2}vt)\right),
\end{align}
where $v\in \mathbb{R}, \mu \in \mathbb{R}^+$ are constants and $\textrm{sech}(y)=\frac{2e^y}{e^{2y}+1}$.

Now we apply Algorithm \ref{alg:HFsolver} to the GPE \eqref{def: gpe} whose exact solution is given by \eqref{eq: gpe explicit sol}. We use Neural-ODE architecture to parameterize the push-forward map. The right-hand side of the Neural-ODE consists of two hidden layers, each containing 50 neurons and the hyperbolic tangent as the activation function. We generate $10,000$ samples for the computation of $G$ metric and potential energy $\mathcal{F}$. The time step size is taken as $h=0.005$ 
unless otherwise stated. We compute the solutions for $\theta(t)$ and use the push-forward $T_{\theta(t)}$ to generate samples for $\rho_{\theta(t)}$. The initial condition is taken from \eqref{eq: gpe explicit sol} with $t=0$. In Figure \ref{1d GPE sampleplot}, we compare the analytical solution for $\rho(t)$ (blue curve) with the histogram generated by $T_{\theta(t)}$ (orange bars) at different time snapshots. The computed solution matches the analytic solution well. 
\begin{figure*}[t!]
    \begin{subfigure}{0.32\textwidth}
        \centering
        \includegraphics[width=0.95\linewidth]{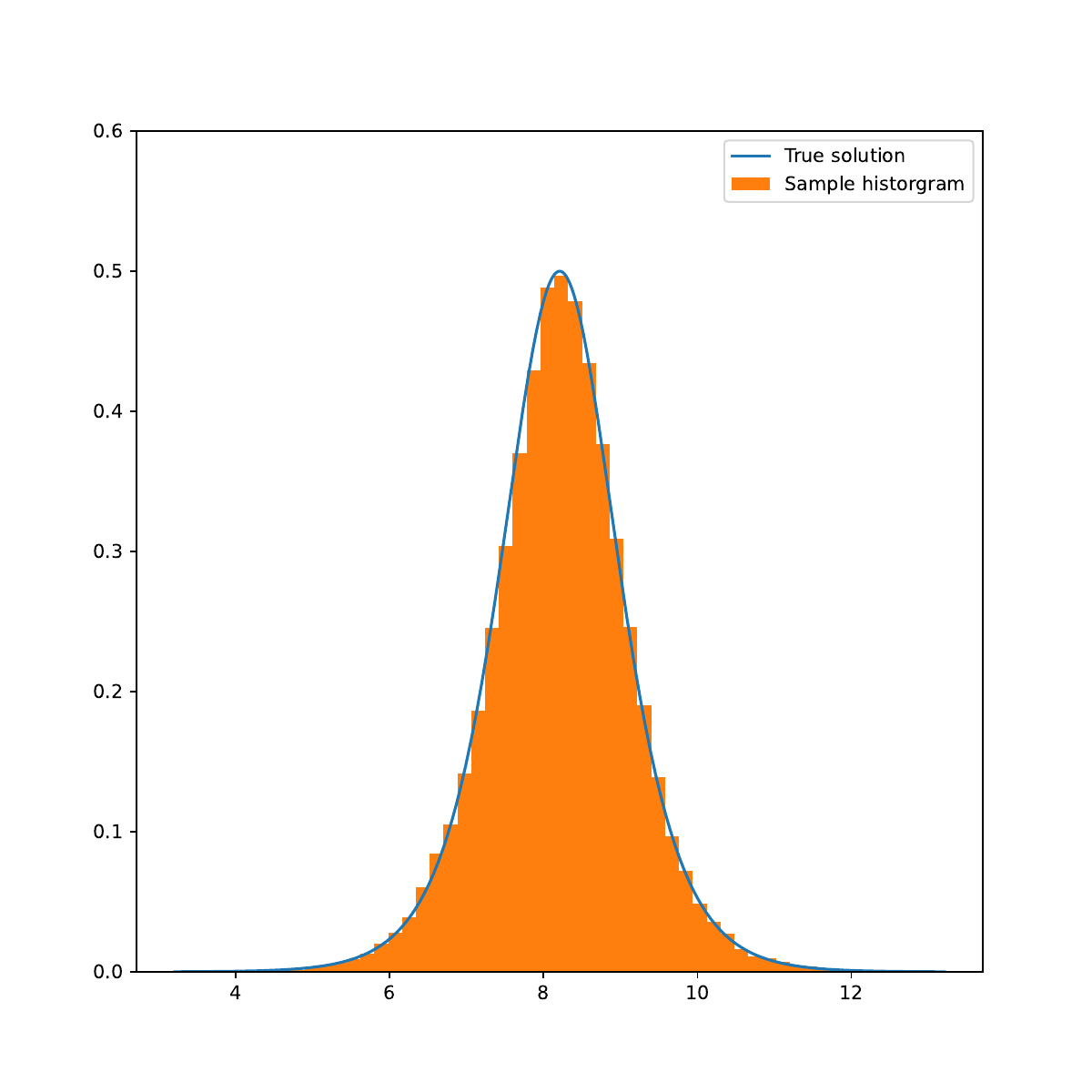}
        \caption{$t=0$}
    \end{subfigure}%
    ~
    \begin{subfigure}{0.32\textwidth}
        \centering
        \includegraphics[width=0.95\linewidth]{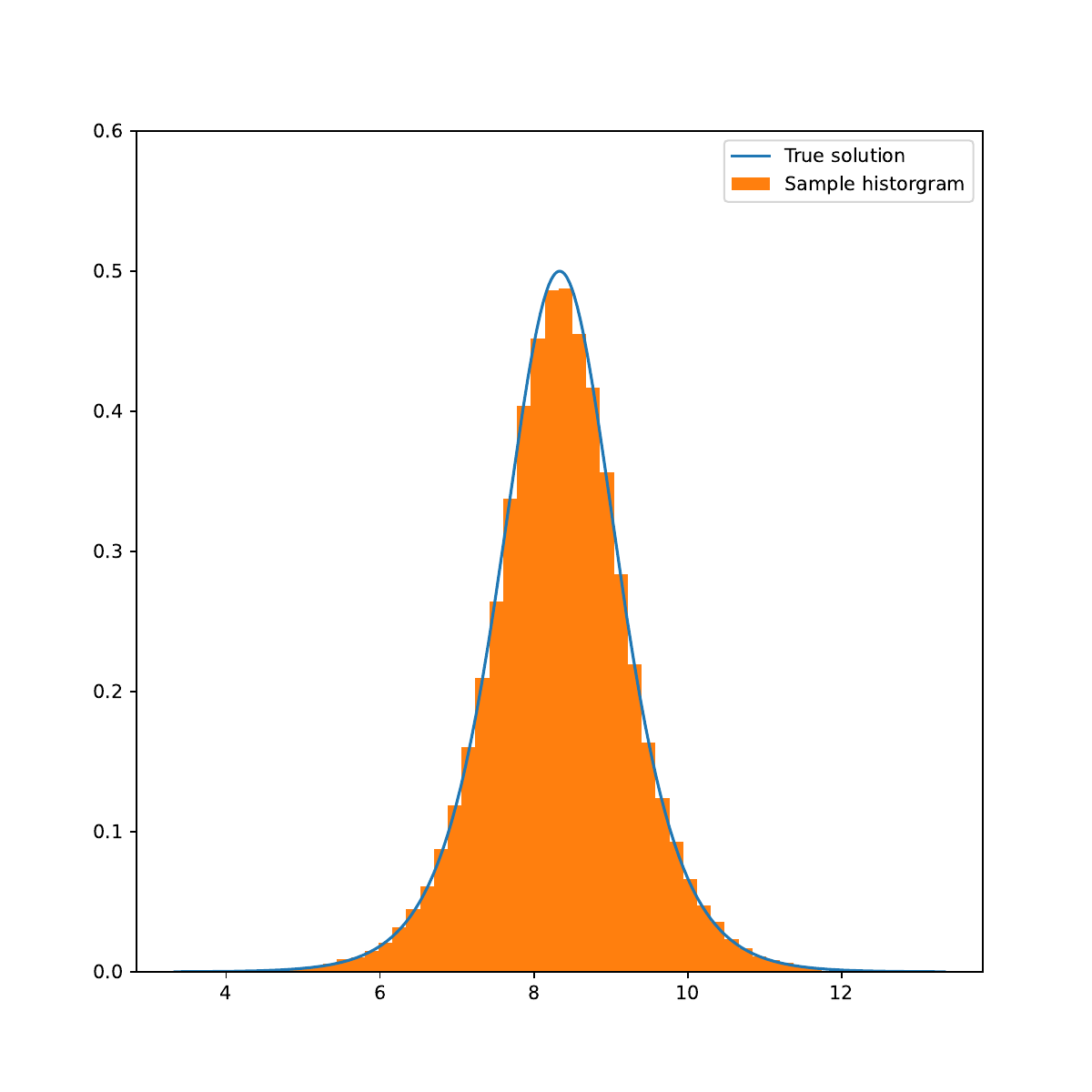}
        \caption{$t=0.25$}
    \end{subfigure}
    ~
    \begin{subfigure}{0.32\textwidth}
        \centering
        \includegraphics[width=0.95\linewidth]{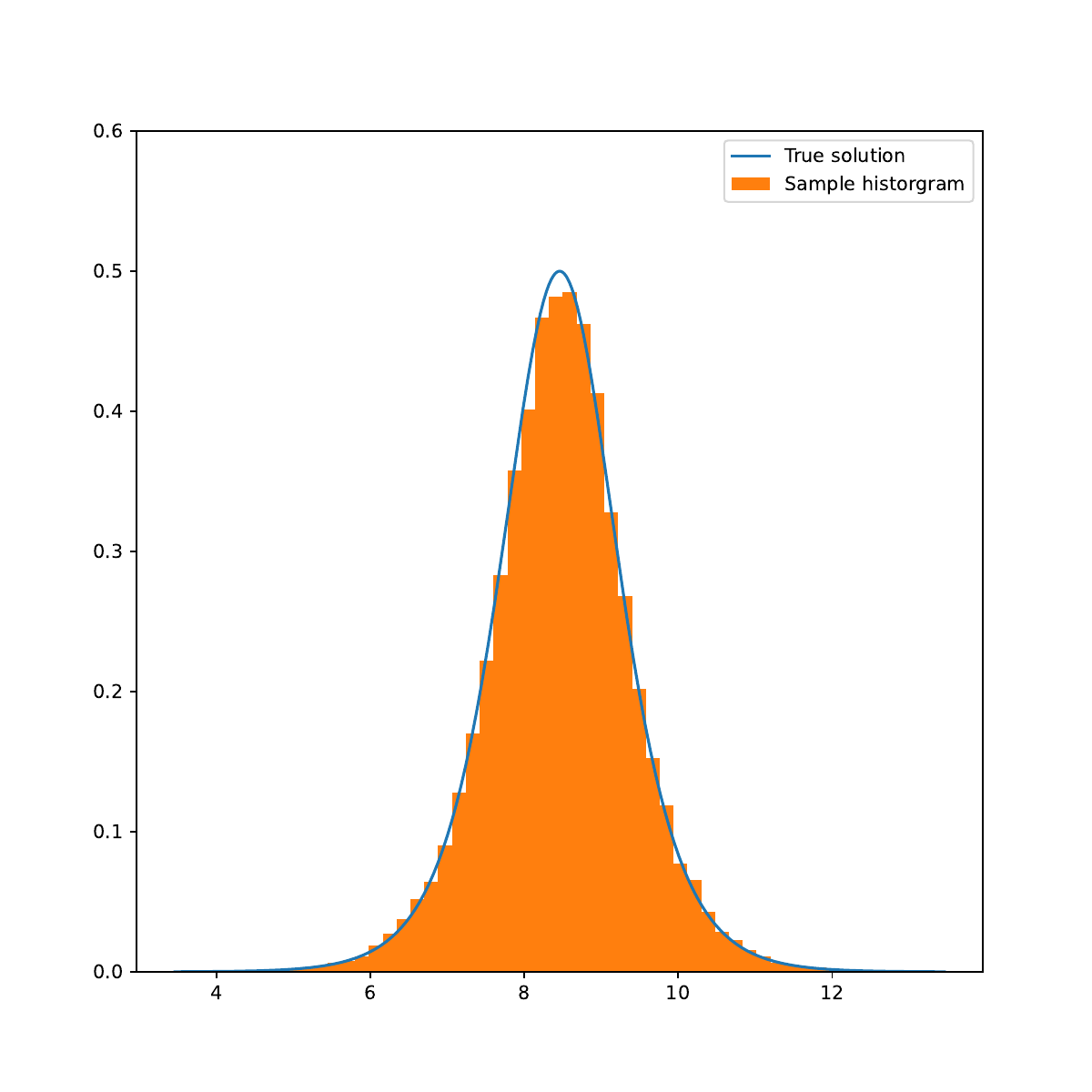}
        \caption{$t=0.5$}
    \end{subfigure}
    \\
    \begin{subfigure}{0.32\textwidth}
        \centering
        \includegraphics[width=0.95\linewidth]{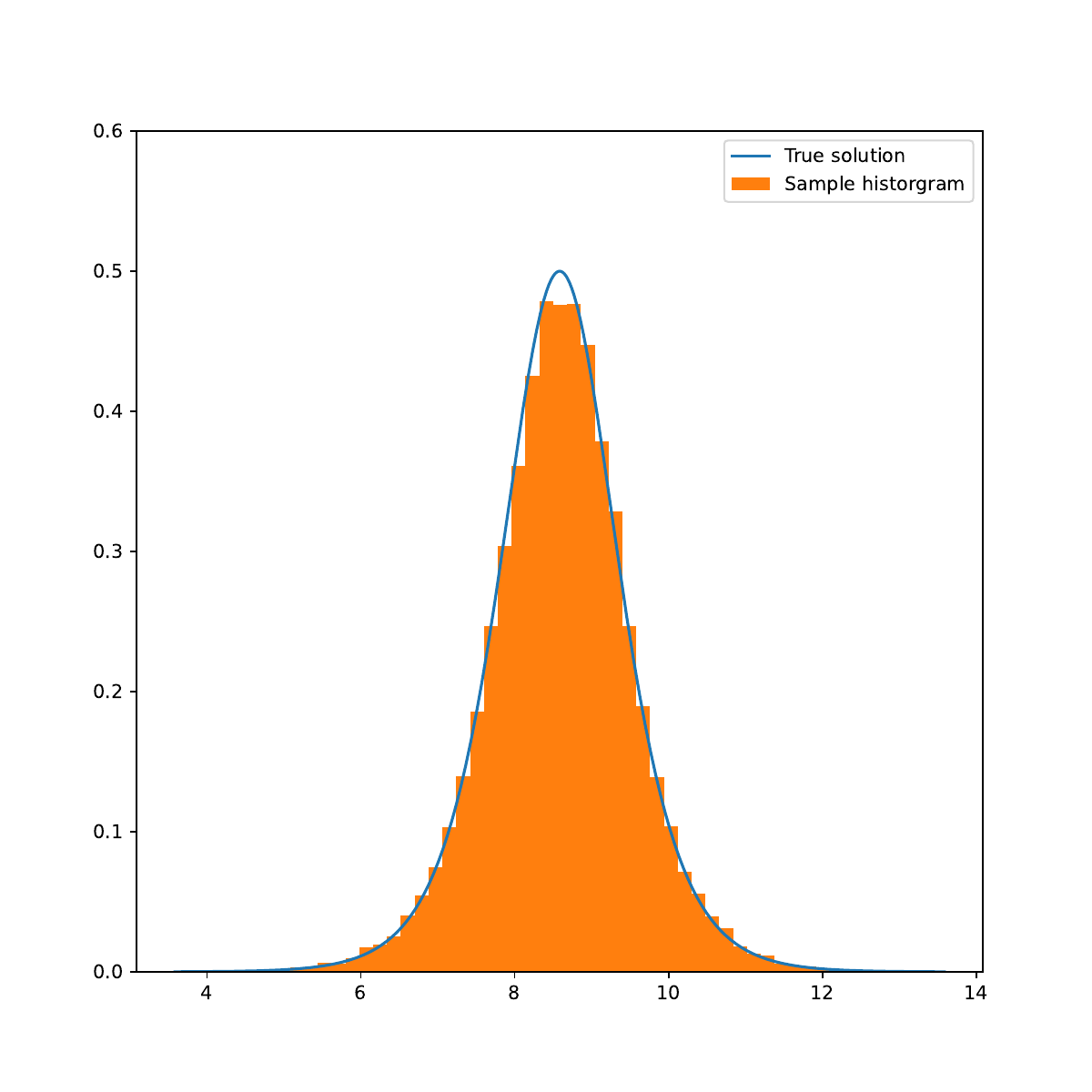}
        \caption{$t=0.75$}
    \end{subfigure}%
    ~
    \begin{subfigure}{0.32\textwidth}
        \centering
        \includegraphics[width=0.95\linewidth]{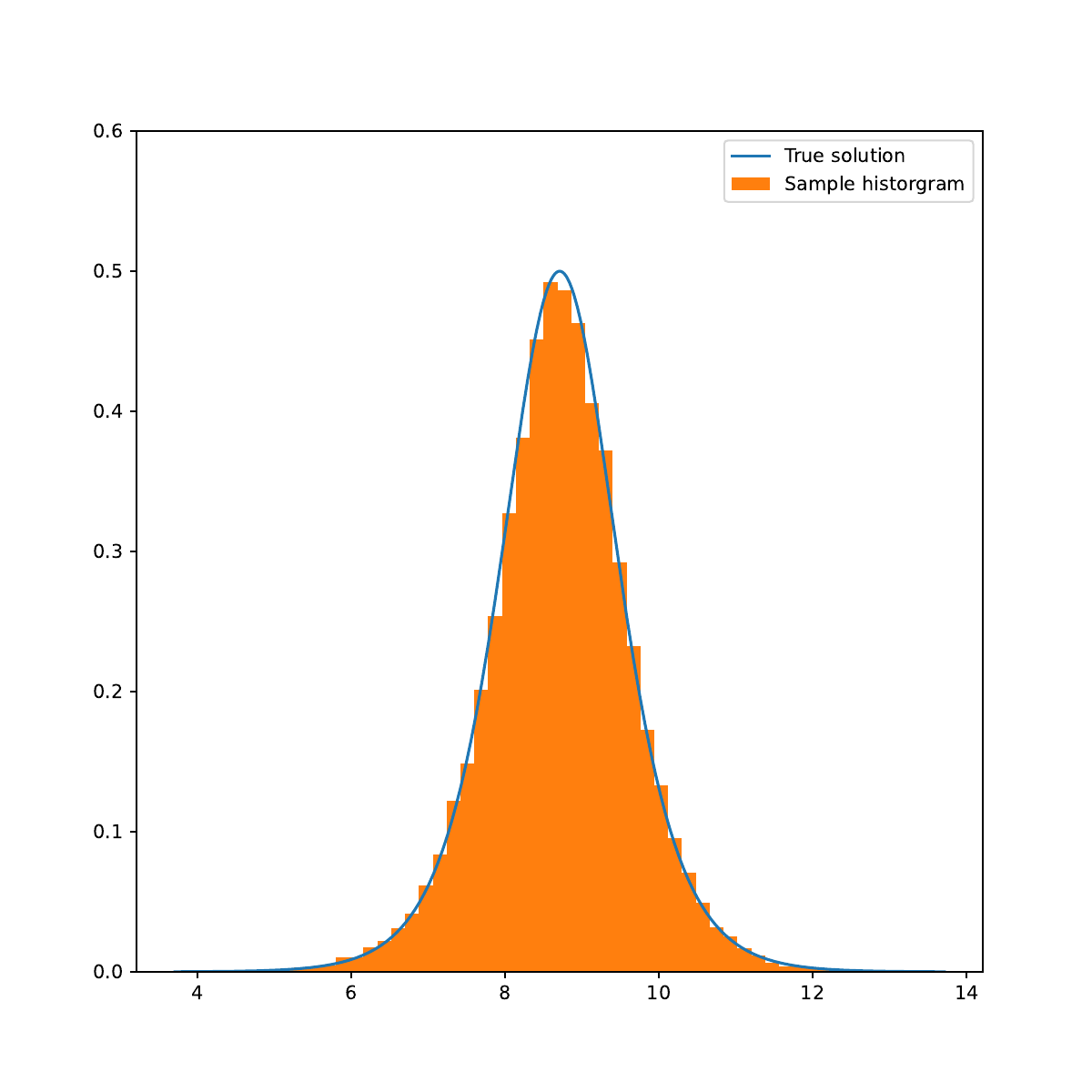}
        \caption{$t=1$}
    \end{subfigure}
    ~
    \begin{subfigure}{0.32\textwidth}
        \centering
        \includegraphics[width=0.95\linewidth]{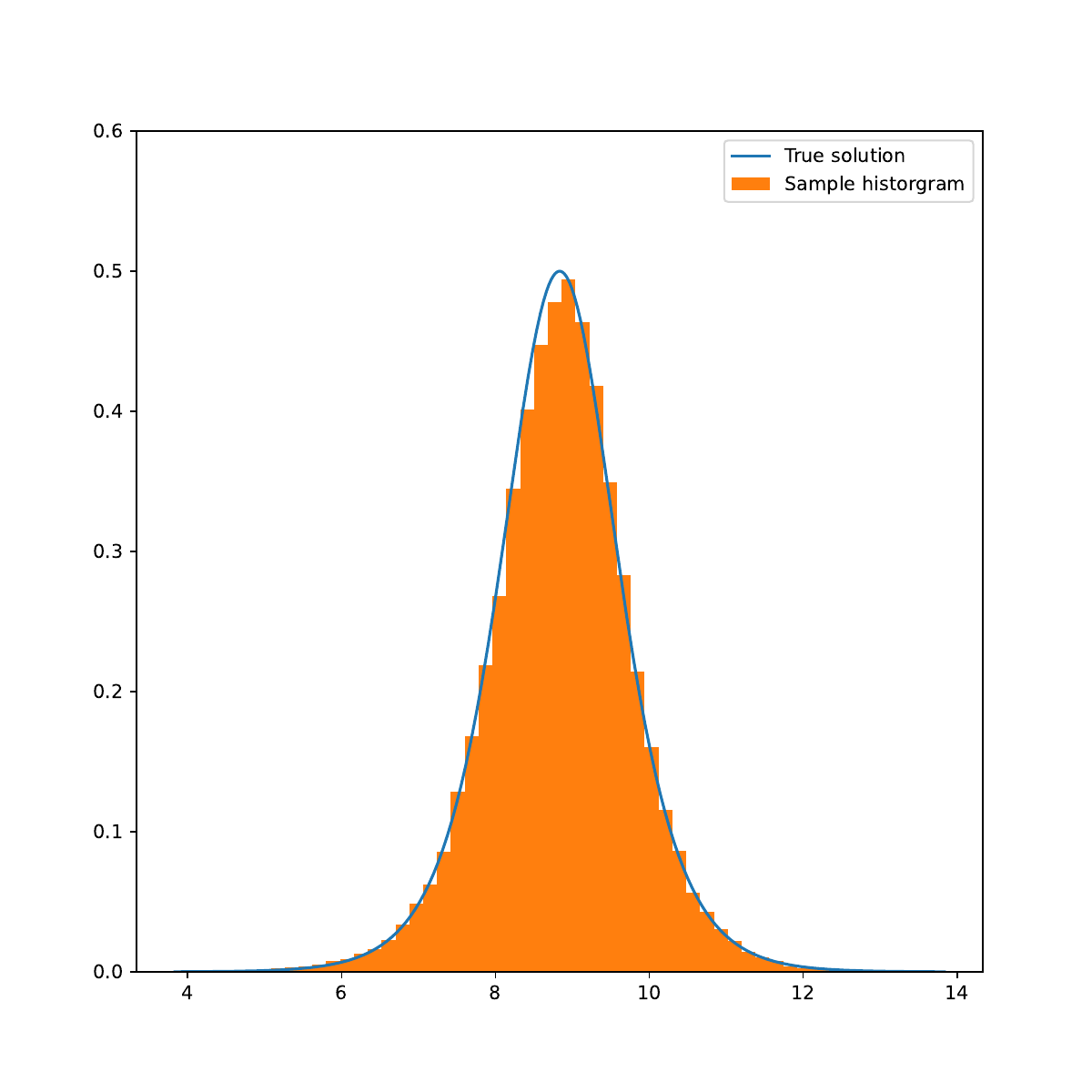}
        \caption{$t=1.25$}
    \end{subfigure}
    \caption{Sample histogram of computed $\rho_{\theta}$ at different time $t$ for $1$D ($d=1$) GPE.}
    \label{1d GPE sampleplot}
\end{figure*}

In the next two experiments, we apply Algorithm \ref{alg:HFsolver} to solve the GPE in $d=3$ and $d=6$, respectively. We also set $V(x)\equiv0, \gamma=-2$ in both experiments. The initial values are taken as the same as $d=1$ with dimension adjustments.  
Figure \ref{3d GPE sampleplot} shows the histograms for the first components of the samples generated by the push-forward map $T_{\theta}$ in the $d=3$ case. Similarly, the histograms are plotted in Figure \ref{6d GPE sampleplot} for $d=6$. We also plot the center of distribution as a function of time in Figure \ref{fig: center of sample 3d} for the $d=3$ experiment.
\begin{figure}[H]
    \centering
    \includegraphics[width=0.5\linewidth]{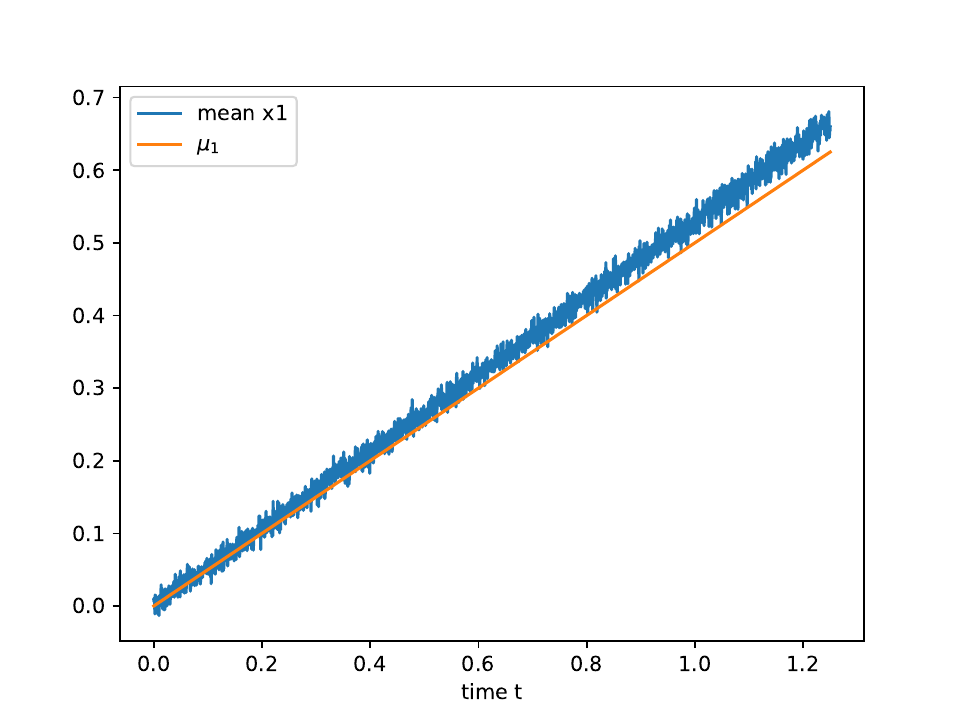}
    \caption{Center of samples for the GPE with $d=3$. The orange curve is the analytical values, the blue curve are obtained by samples generated by the push-forward $T_{\theta(t)}$. 
    }
    \label{fig: center of sample 3d}
\end{figure}

\begin{figure*}[t!]
    \begin{subfigure}{0.32\textwidth}
        \centering
        \includegraphics[width=0.95\linewidth]{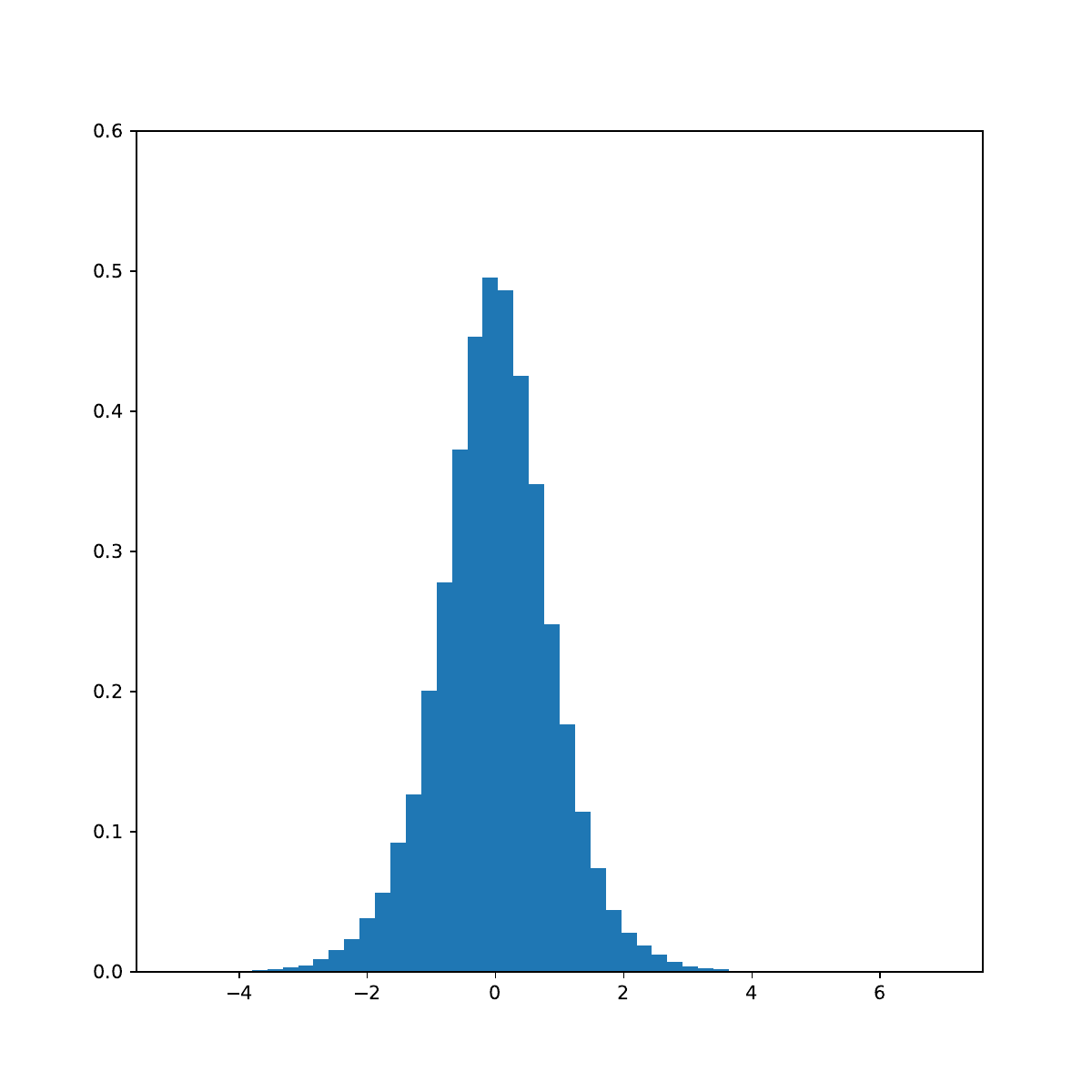}
        \caption{$t=0$}
    \end{subfigure}%
    ~
    \begin{subfigure}{0.32\textwidth}
        \centering
        \includegraphics[width=0.95\linewidth]{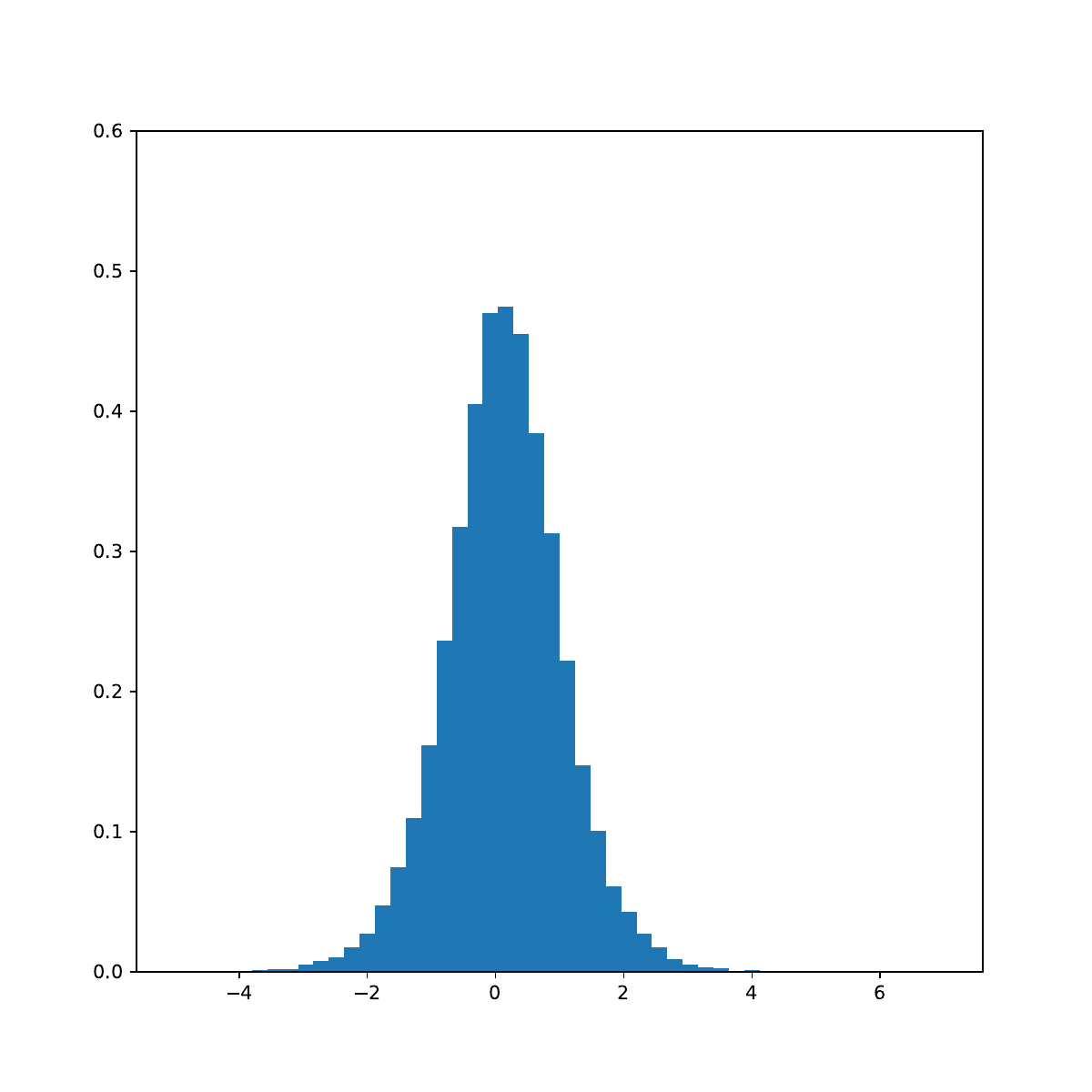}
        \caption{$t=0.25$}
    \end{subfigure}
    ~
    \begin{subfigure}{0.32\textwidth}
        \centering
        \includegraphics[width=0.95\linewidth]{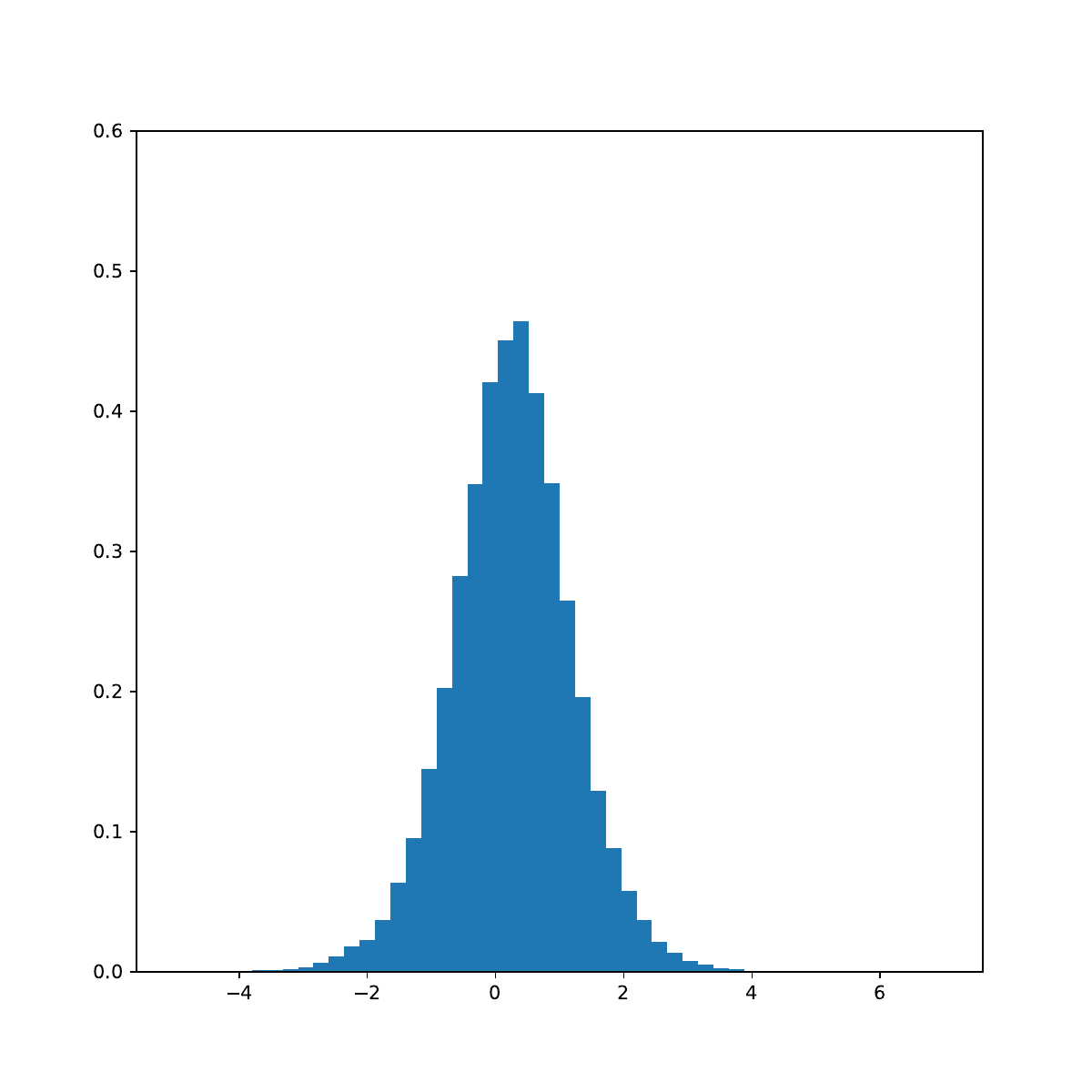}
        \caption{$t=0.5$}
    \end{subfigure}
    \\
    \begin{subfigure}{0.32\textwidth}
        \centering
        \includegraphics[width=0.95\linewidth]{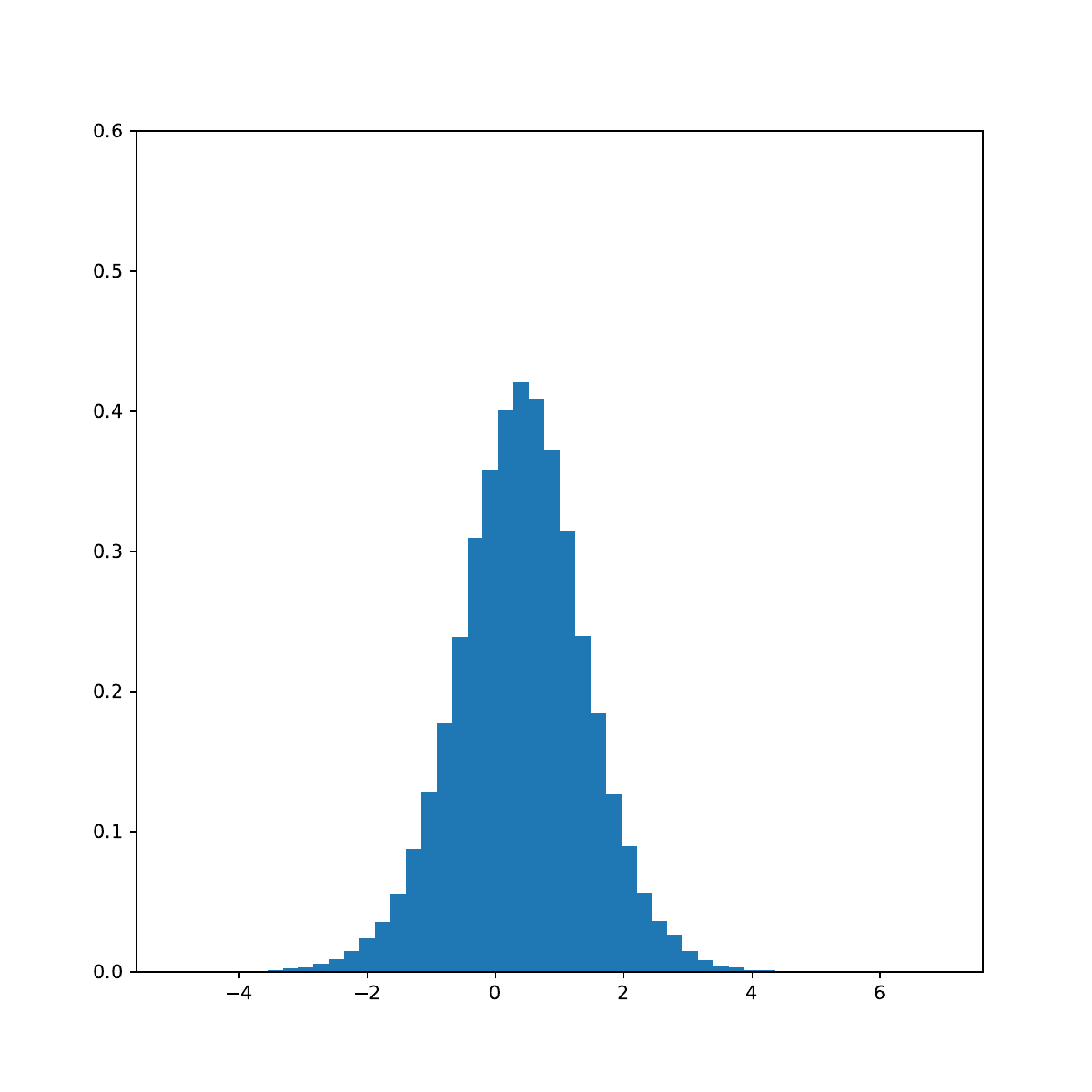}
        \caption{$t=0.75$}
    \end{subfigure}%
    ~
    \begin{subfigure}{0.32\textwidth}
        \centering
        \includegraphics[width=0.95\linewidth]{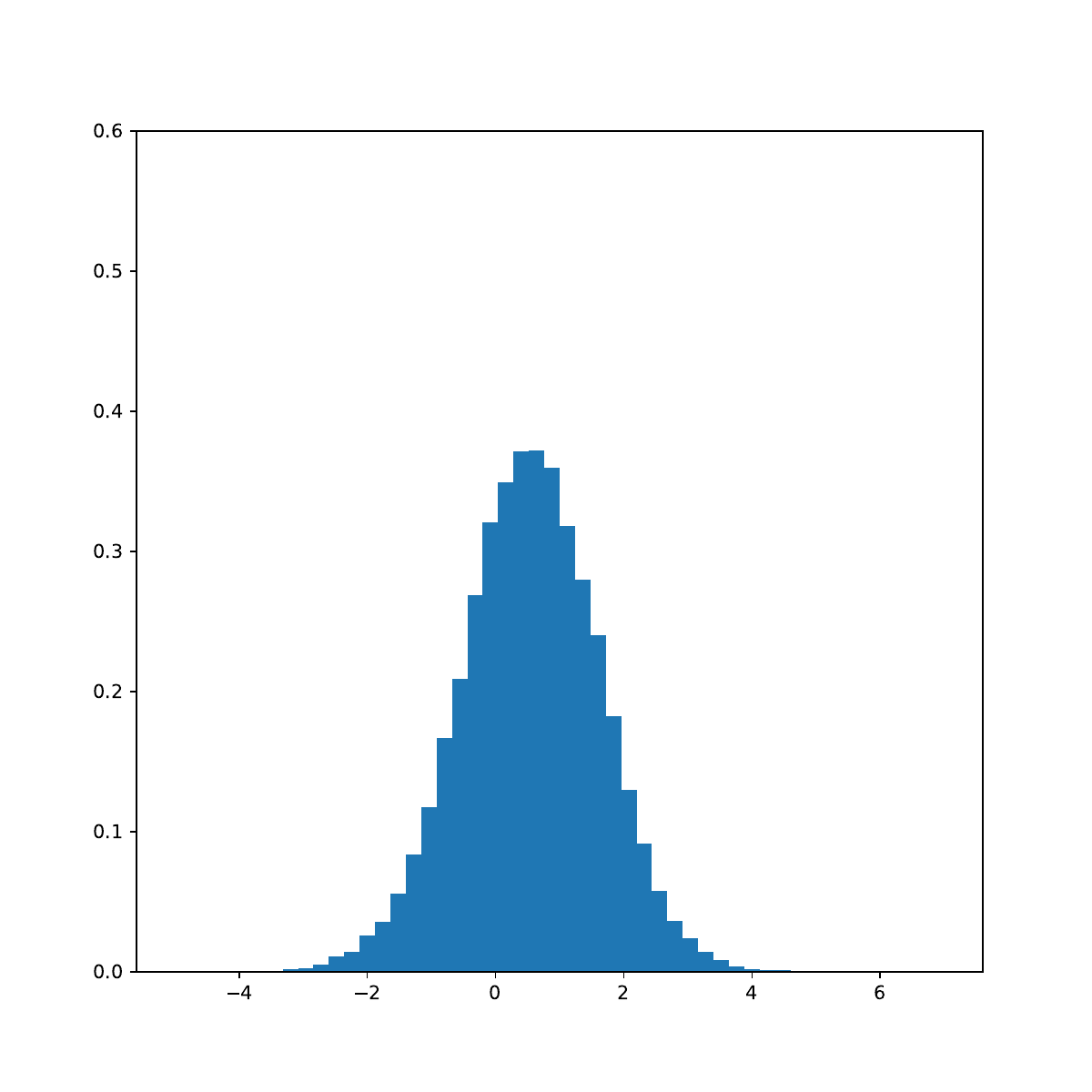}
        \caption{$t=1$}
    \end{subfigure}
    ~
    \begin{subfigure}{0.32\textwidth}
        \centering
        \includegraphics[width=0.95\linewidth]{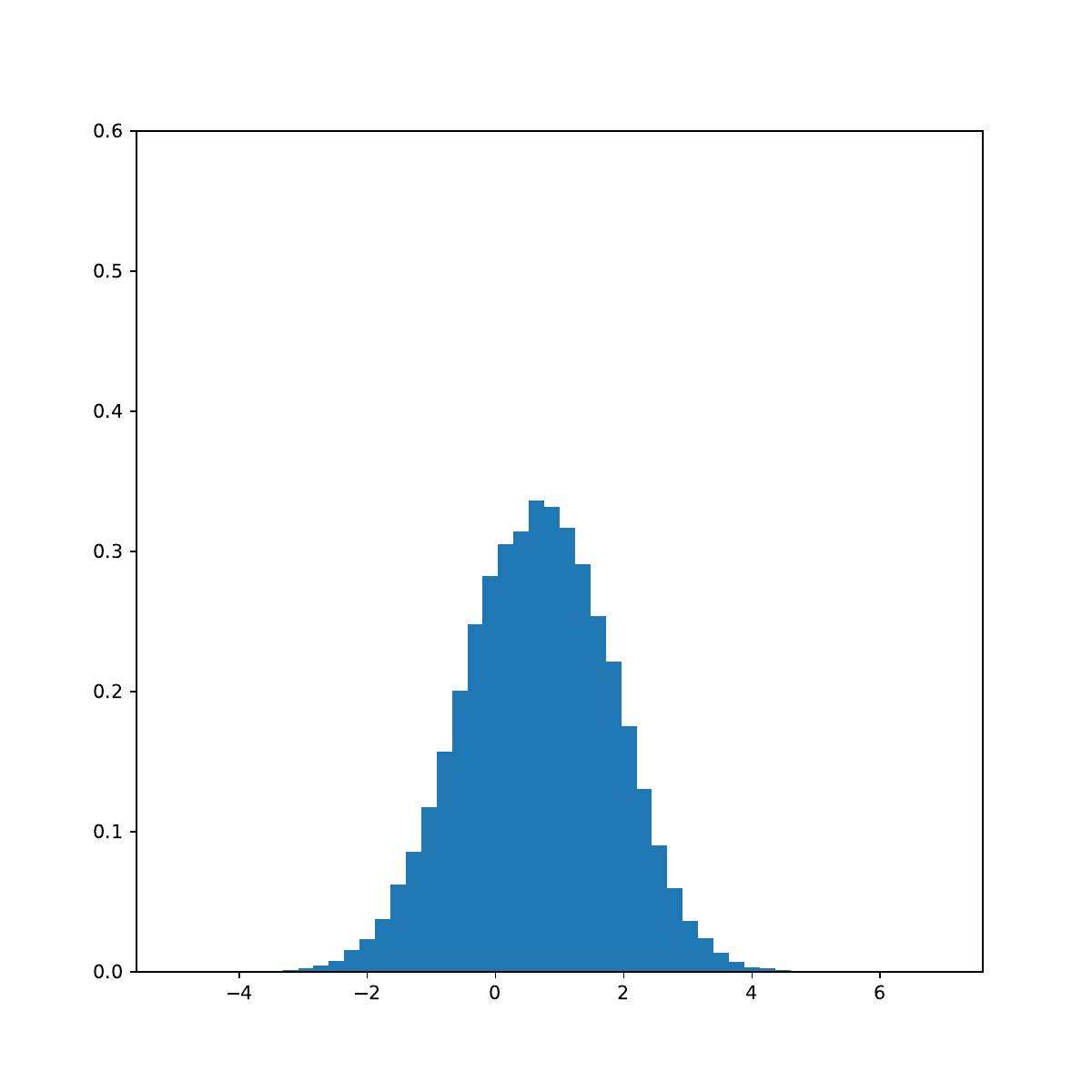}
        \caption{$t=1.25$}
    \end{subfigure}
    \caption{Sample histogram of computed $\rho_{\theta}$ at different time $t$ for $3$D GPE.}
    \label{3d GPE sampleplot}
\end{figure*}

\begin{figure*}[t!]
    \begin{subfigure}{0.32\textwidth}
        \centering
        \includegraphics[width=0.95\linewidth]{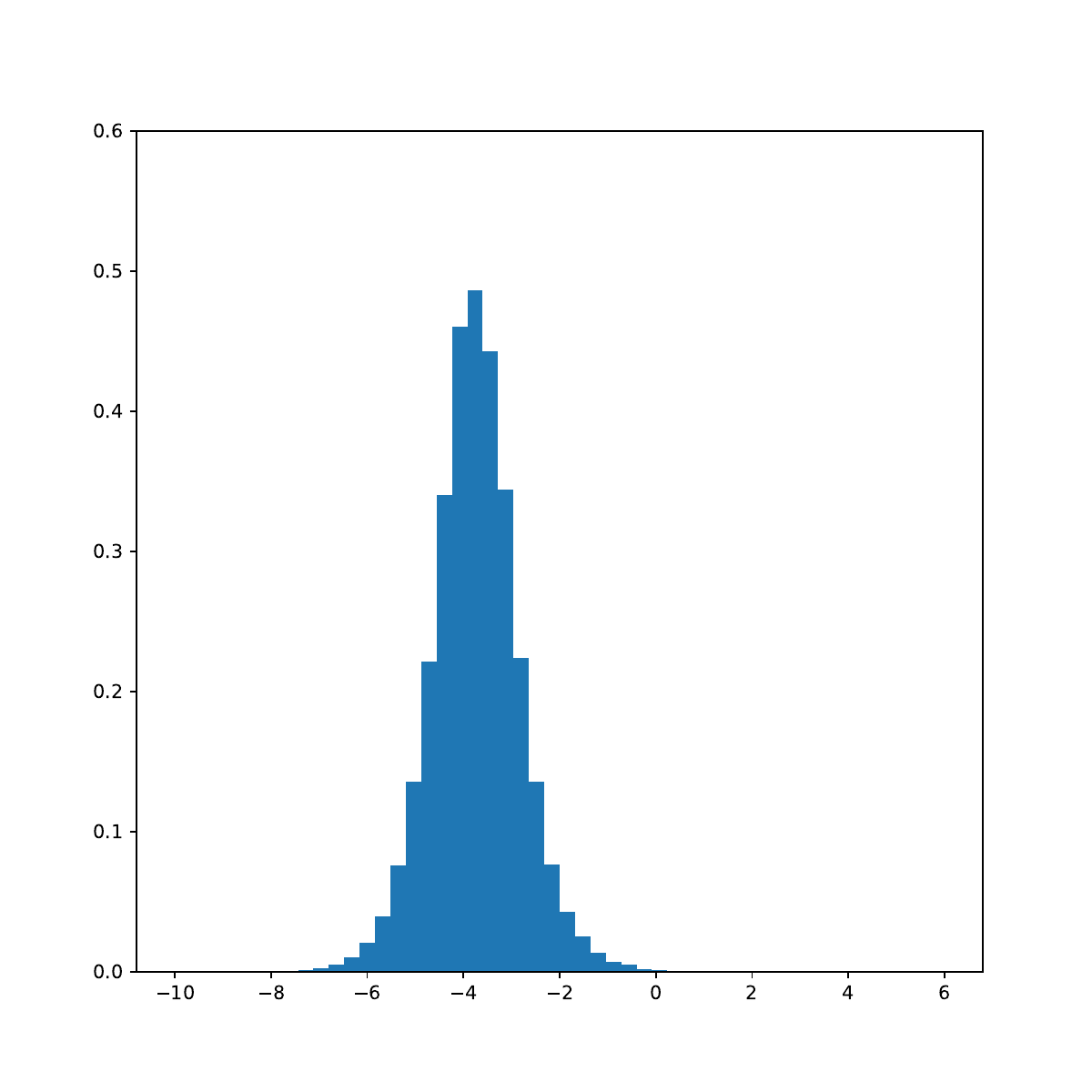}
        \caption{$t=0$}
    \end{subfigure}%
    ~
    \begin{subfigure}{0.32\textwidth}
        \centering
        \includegraphics[width=0.95\linewidth]{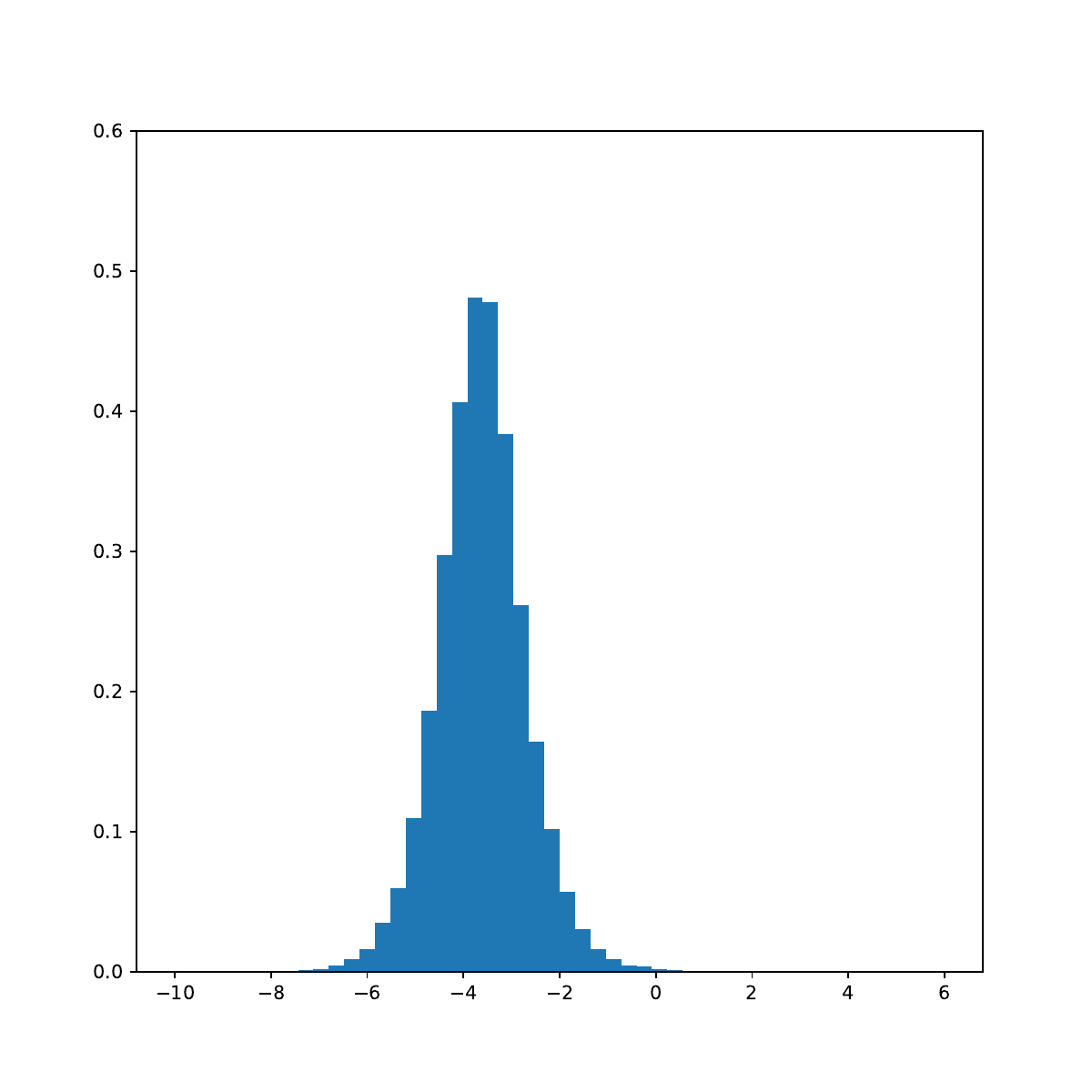}
        \caption{$t=0.25$}
    \end{subfigure}
    ~
    \begin{subfigure}{0.32\textwidth}
        \centering
        \includegraphics[width=0.95\linewidth]{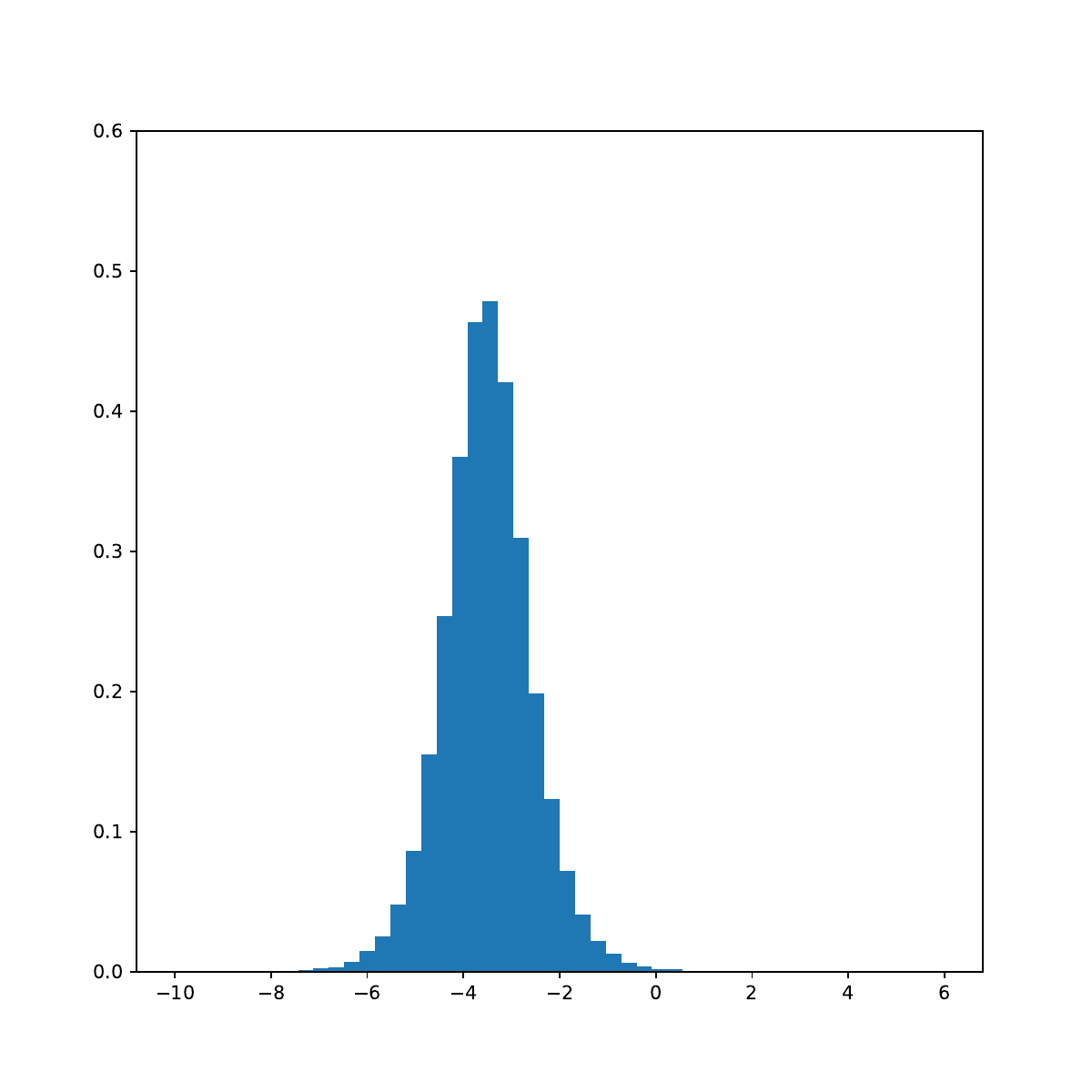}
        \caption{$t=0.5$}
    \end{subfigure}
    \\
    \begin{subfigure}{0.32\textwidth}
        \centering
        \includegraphics[width=0.95\linewidth]{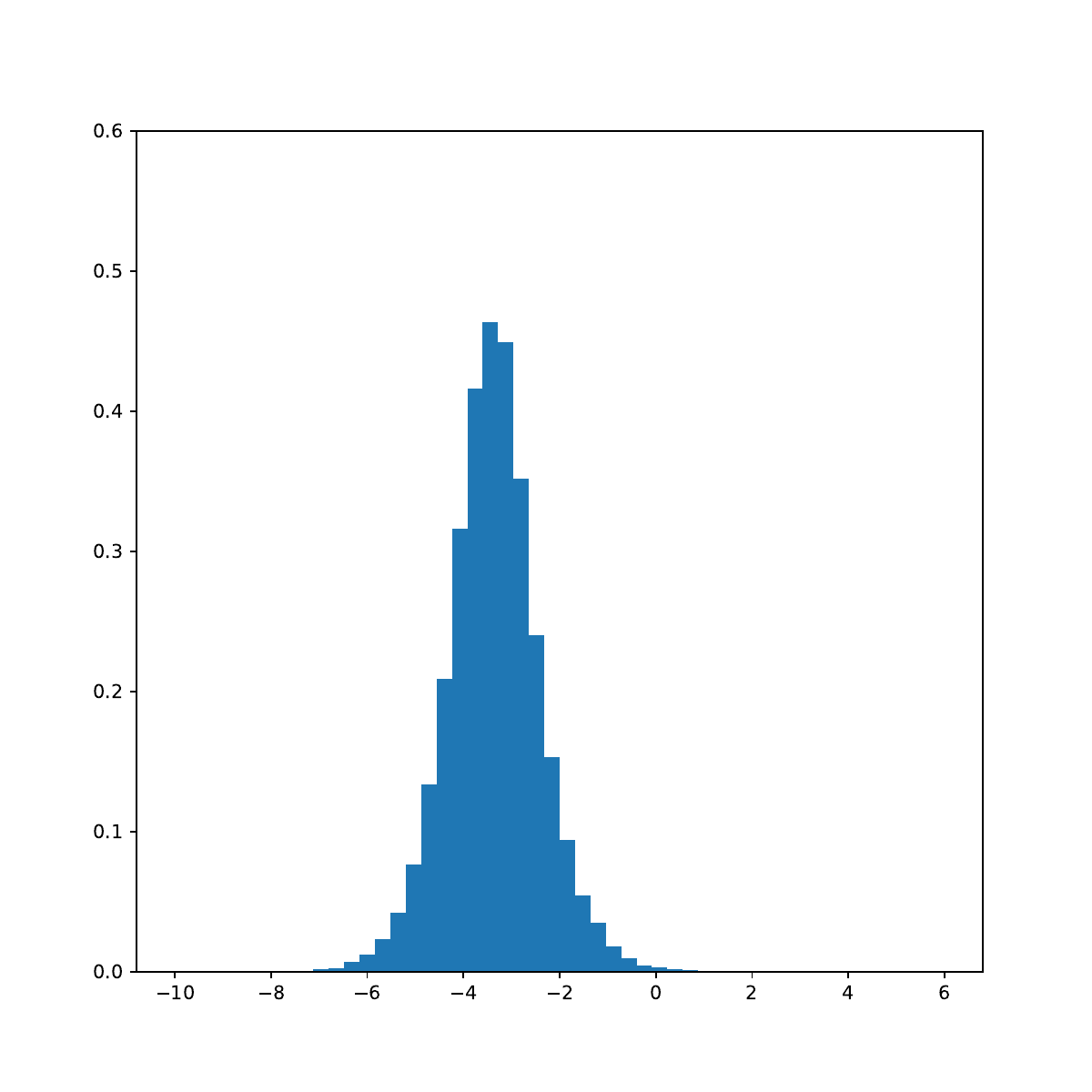}
        \caption{$t=0.75$}
    \end{subfigure}%
    ~
    \begin{subfigure}{0.32\textwidth}
        \centering
        \includegraphics[width=0.95\linewidth]{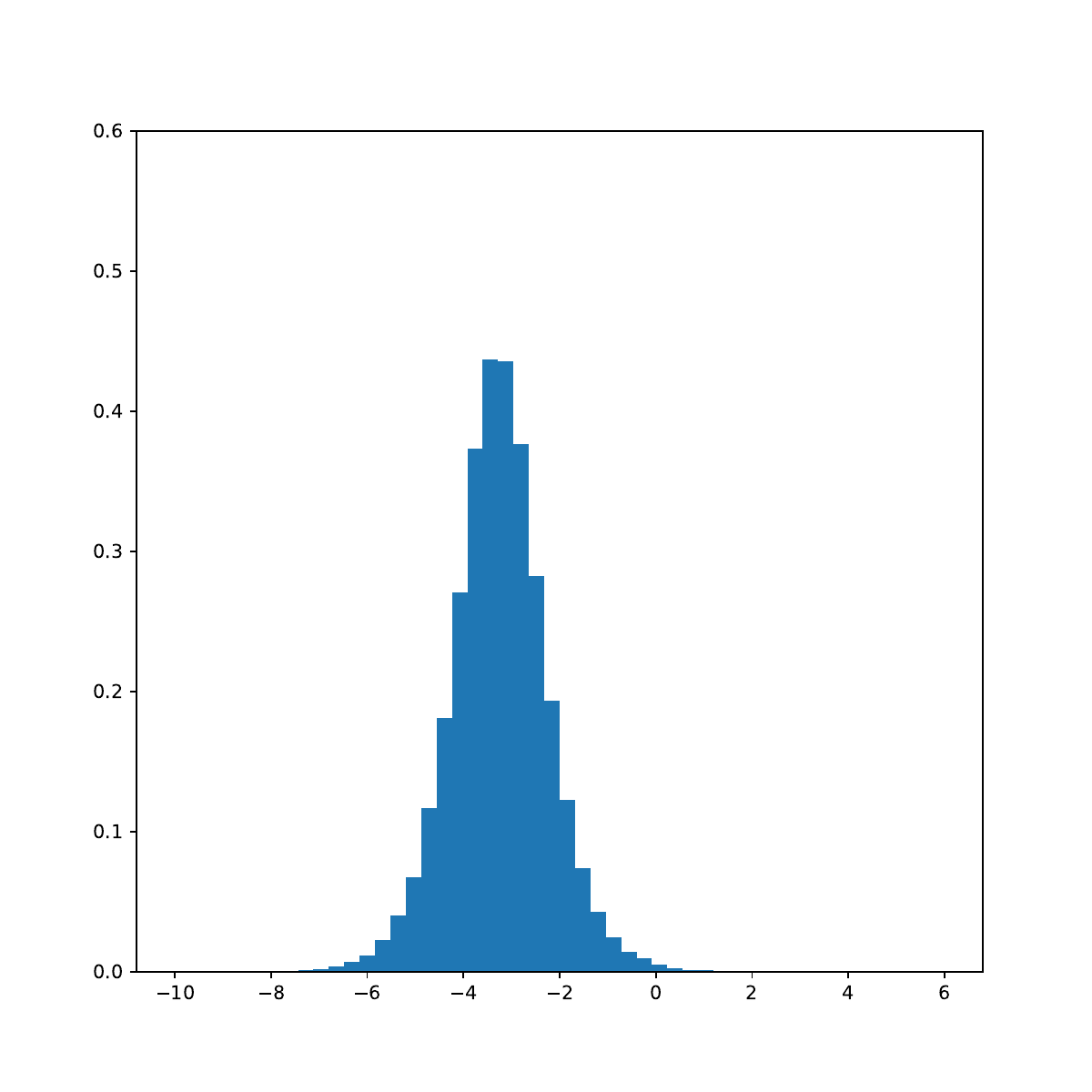}
        \caption{$t=1$}
    \end{subfigure}
    ~
    \begin{subfigure}{0.32\textwidth}
        \centering
        \includegraphics[width=0.95\linewidth]{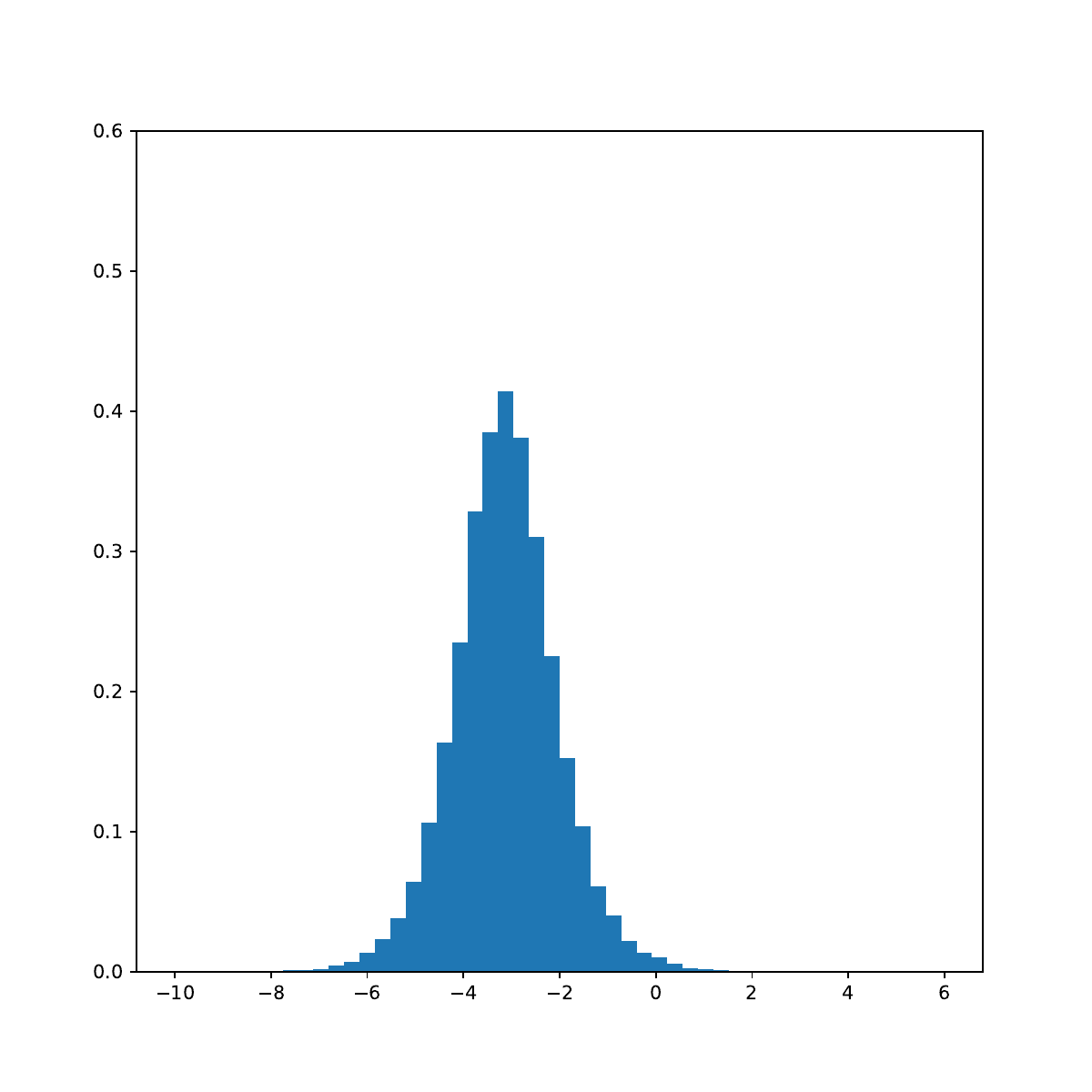}
        \caption{$t=1.25$}
    \end{subfigure}
    \caption{Sample histogram of computed $\rho_{\theta}$ at different time $t$ for $6$D GPE.}
    \label{6d GPE sampleplot}
\end{figure*}

\subsection{TDSE for system with three interactive-particles}
Finally, we present a numerical example of solving a system with three interactive-particles. We consider the three particles in a 3D space, with the origin exerting an attractive force on each of them.
%
We set the potential $\mathcal{F}_R$ to be
\begin{align}
    \mathcal{F}_R(\rho) = \iiint \left[-\sum_{k}\frac{3}{\epsilon + \left|z_k\right|} + \sum_{k<j}\frac{1}{\epsilon + \left|z_k-z_j\right|}\right]\rho(z_1, z_2, z_3)dz_1dz_2dz_3,
\end{align}
where $z_k := (x_{3k-2}, x_{3k-1}, x_{3k})^\top \in \mathbb{R}^3$ for $k=1, 2, 3$, $\epsilon $ is a small positive number which is set to $0.005$ in our computation, and $\left|\,\cdot\,\right|$ is the $l^2$ norm in Euclidean space. The $L^2$ first variation of $\mathcal{F}_R$ is:
\begin{align}
    \frac{\delta}{\delta \rho}\mathcal{F}_R(\rho) = -\sum_{k}\frac{3}{\epsilon + \left|z_k\right|} + \sum_{k<j}\frac{1}{\epsilon + \left|z_k-z_j\right|}.
\end{align}
The corresponding TDSE in the wavefunction form is:
\begin{align}
    \label{def: Li TDSE}
    i\frac{\partial}{\partial t}\psi (t, x)=-\frac{1}{2}\Delta \psi(t, x) + \left[-\sum_{k}\frac{3}{\epsilon + \left|z_k\right|} + \sum_{k<j}\frac{1}{\epsilon + \left|z_k-z_j\right|}\right]\cdot \psi(t, x).
\end{align}
We want to generate sample trajectories following the density function $\rho_t(x_1,\dots,x_9)$ corresponding to the solution of \eqref{def: Li TDSE}.

In our simulation, the initial density function is taken as a Gaussian in $\mathbb{R}^9$:
\begin{align}
    \rho_0(z_1, z_2, z_3)=\frac{1}{(\sqrt{2\pi})^9} \exp \Big(-\frac{1}{2}\left[|z_1-c_1|^2+|z_2-c_2|^2+|z_3-c_3|^2\right] \Big)
\end{align}
where $c_1=(1, 0, 0), c_2=(0, 1, 0), c_3=(0, 0, 1)$. We take the initial $\Phi_0$ as:
\begin{align}
    \Phi_0(x) = \frac{x_2^2-x_3^2}{(x_2^2+x_3^2)^{0.4}}+ \frac{x_6^2-x_4^2}{(x_4^2+x_6^2)^{0.4}}+\frac{x_7^2-x_8^2}{(x_7^2+x_8^2)^{0.4}}.
\end{align}
%

Again we use the same Neural-ODE architecture for GPE in Section \ref{sec: GPE}, consisting of two hidden layers, each containing 50 neurons, with the hyperbolic tangent as the activation function. 
We generate $12,000$ samples for the computation of the operator $G$ while $3,000,000$ samples are used to estimate the potential energy $\mathcal{F}$.
These choices are intuitively set according to the computation complexities of estimating $G$ and $F$, which are quadratic and linear with respect to the number of samples, respectively.
%
%
The time step size is set as $h=0.001$. We plot the histogram of $x_1$ versus time in Figure \ref{9d TDSE sampleplot}.
\begin{figure*}[t!]
    \begin{subfigure}{0.24\textwidth}
        \centering
        \includegraphics[width=0.95\linewidth]{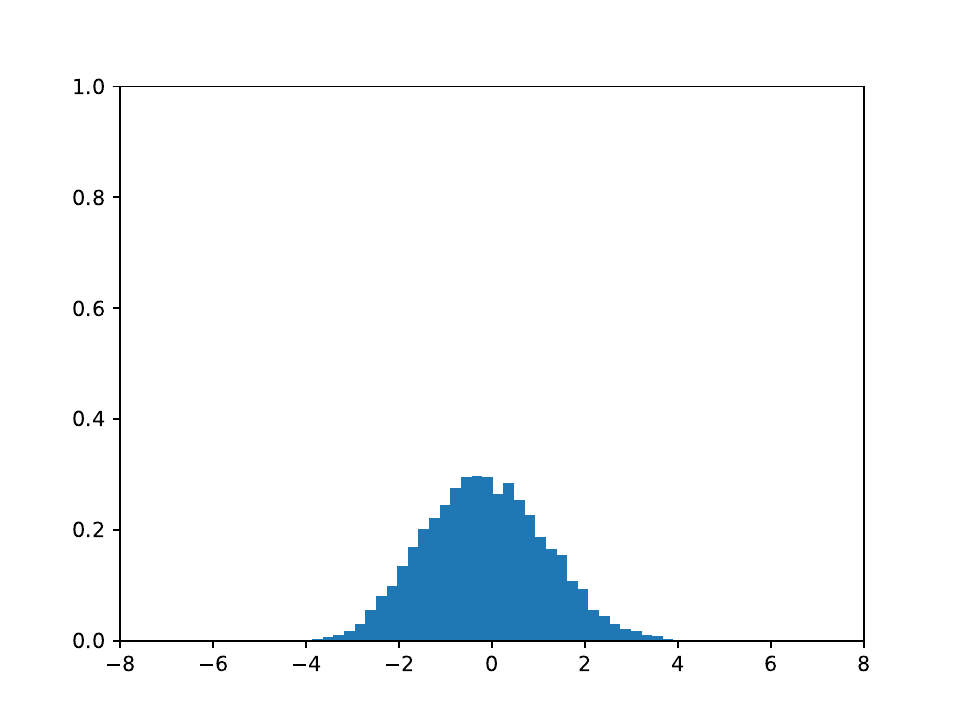}
        \caption{$t=0$}
    \end{subfigure}%
    ~
    \begin{subfigure}{0.24\textwidth}
        \centering
        \includegraphics[width=0.95\linewidth]{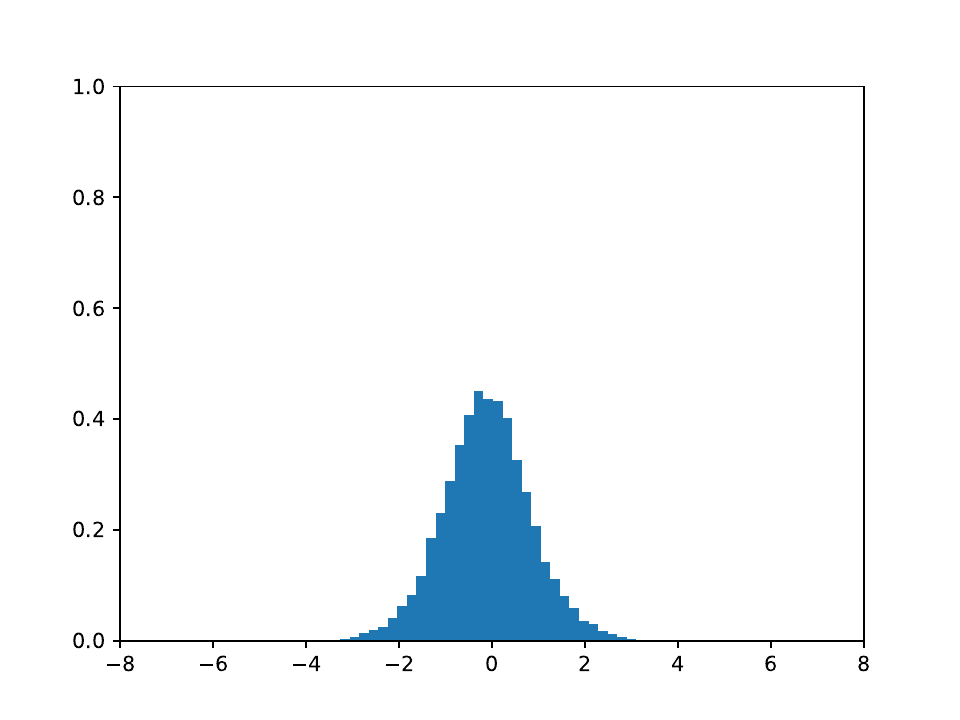}
        \caption{$t=0.3$}
    \end{subfigure}
    ~
    \begin{subfigure}{0.24\textwidth}
        \centering
        \includegraphics[width=0.95\linewidth]{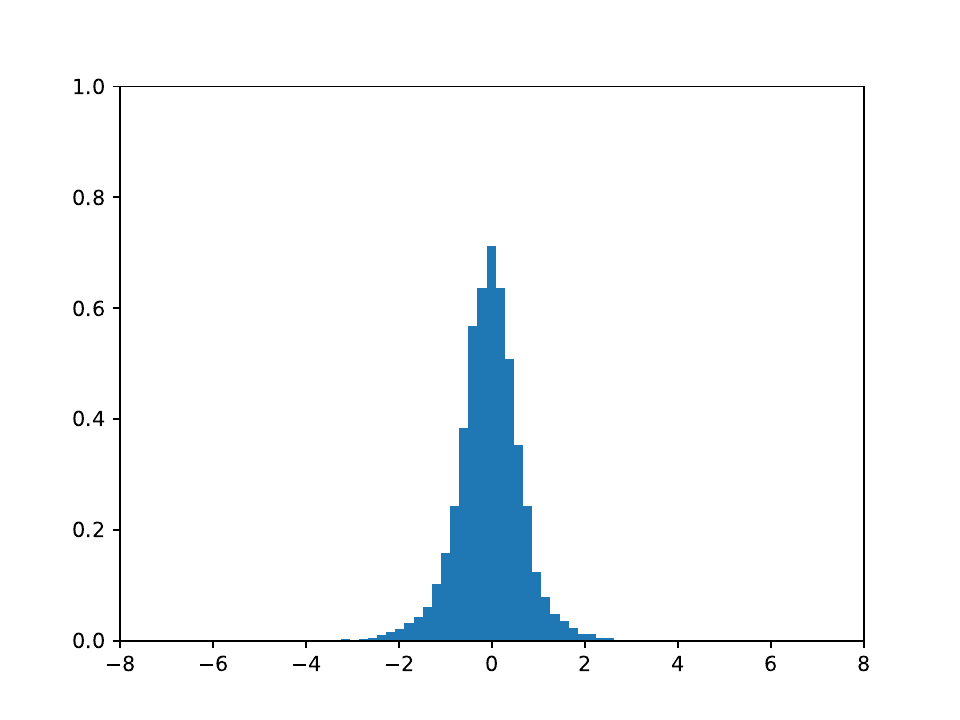}
        \caption{$t=0.6$}
    \end{subfigure}
    ~
    \begin{subfigure}{0.24\textwidth}
        \centering
        \includegraphics[width=0.95\linewidth]{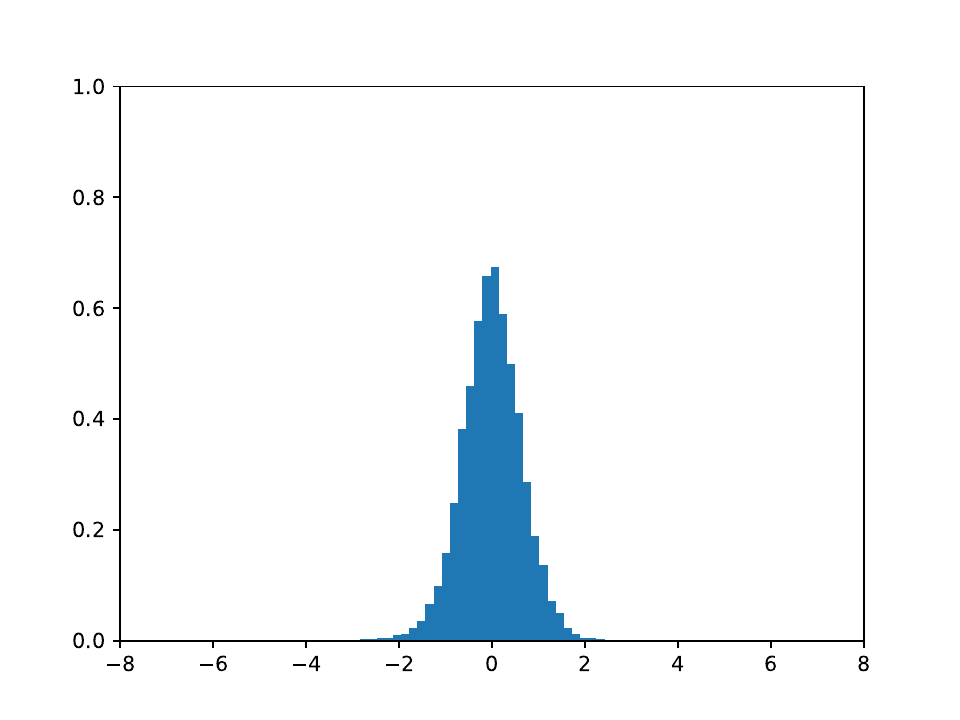}
        \caption{$t=0.9$}
    \end{subfigure}
    \\
    \begin{subfigure}{0.24\textwidth}
        \centering
        \includegraphics[width=0.95\linewidth]{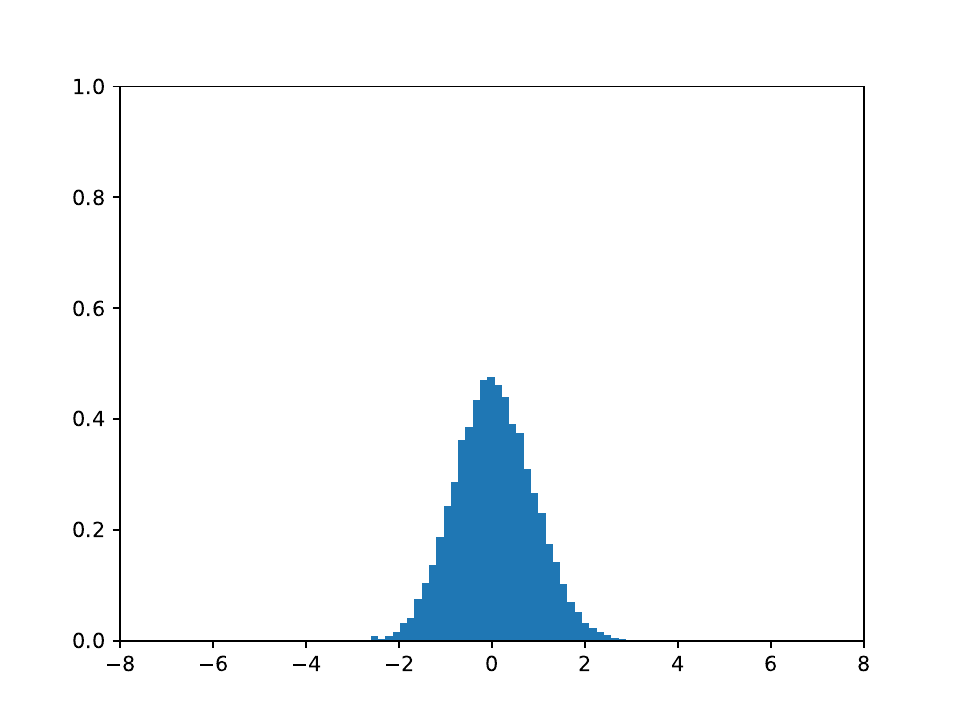}
        \caption{$t=1.2$}
    \end{subfigure}
    ~
    \begin{subfigure}{0.24\textwidth}
        \centering
        \includegraphics[width=0.95\linewidth]{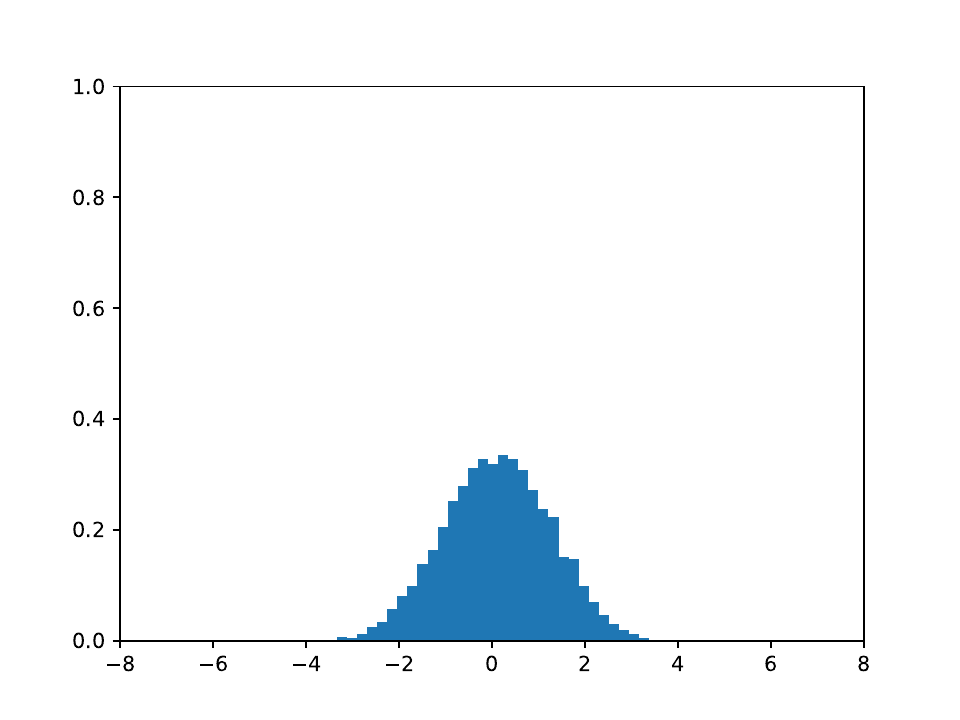}
        \caption{$t=1.5$}
    \end{subfigure}
    ~
    \begin{subfigure}{0.24\textwidth}
        \centering
        \includegraphics[width=0.95\linewidth]{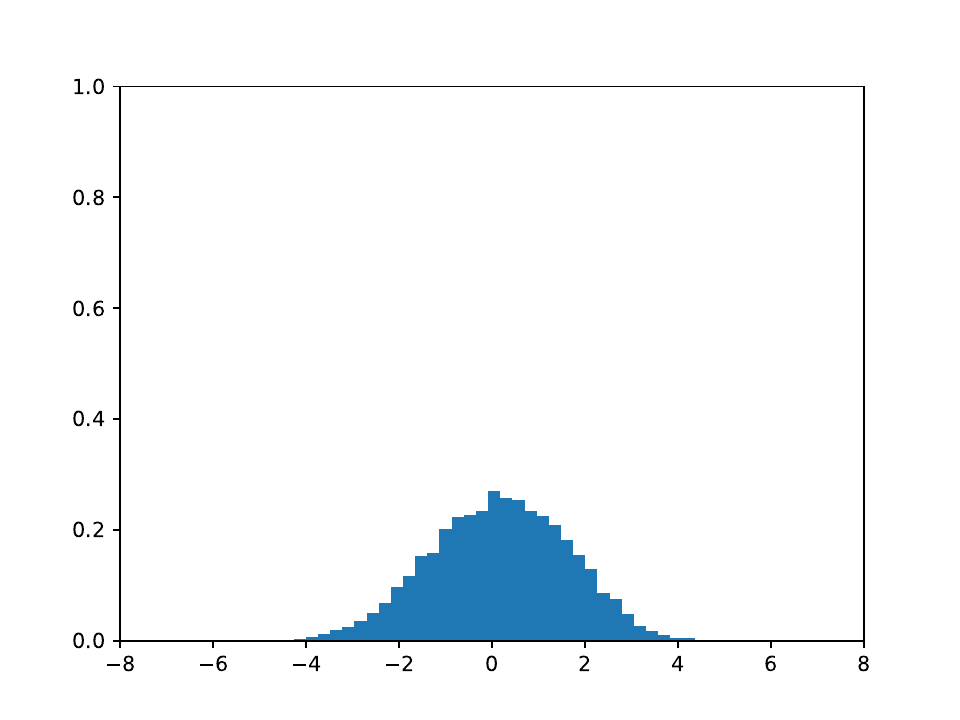}
        \caption{$t=1.8$}
    \end{subfigure}%
    ~
    \begin{subfigure}{0.24\textwidth}
        \centering
        \includegraphics[width=0.95\linewidth]{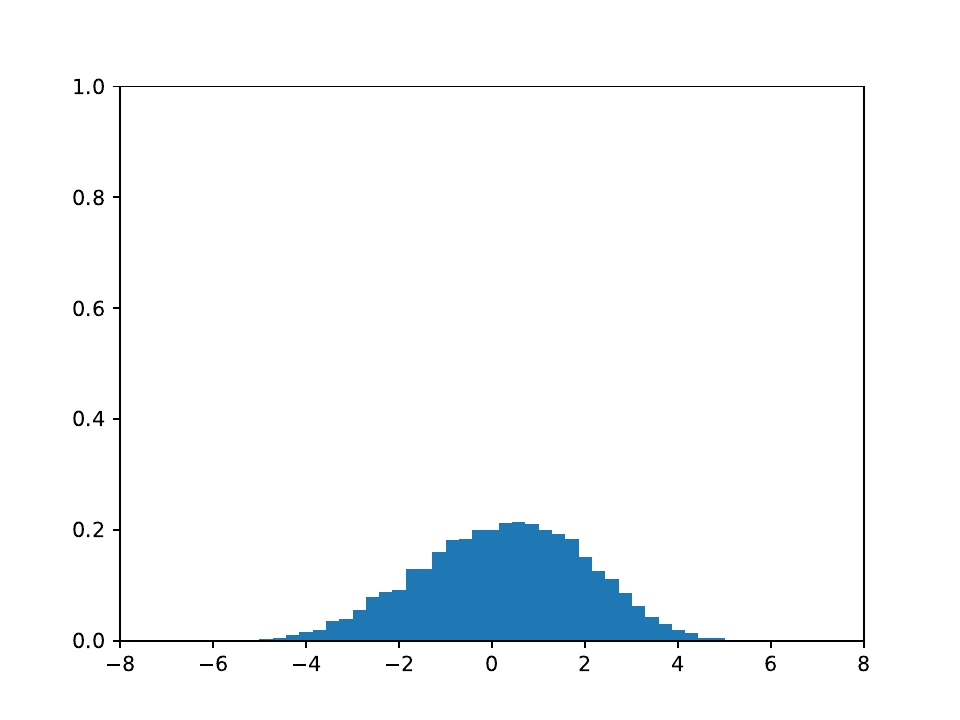}
        \caption{$t=2.1$}
    \end{subfigure}
    \\
    \begin{center}
    \begin{subfigure}{0.24\textwidth}
        \centering
        \includegraphics[width=0.95\linewidth]{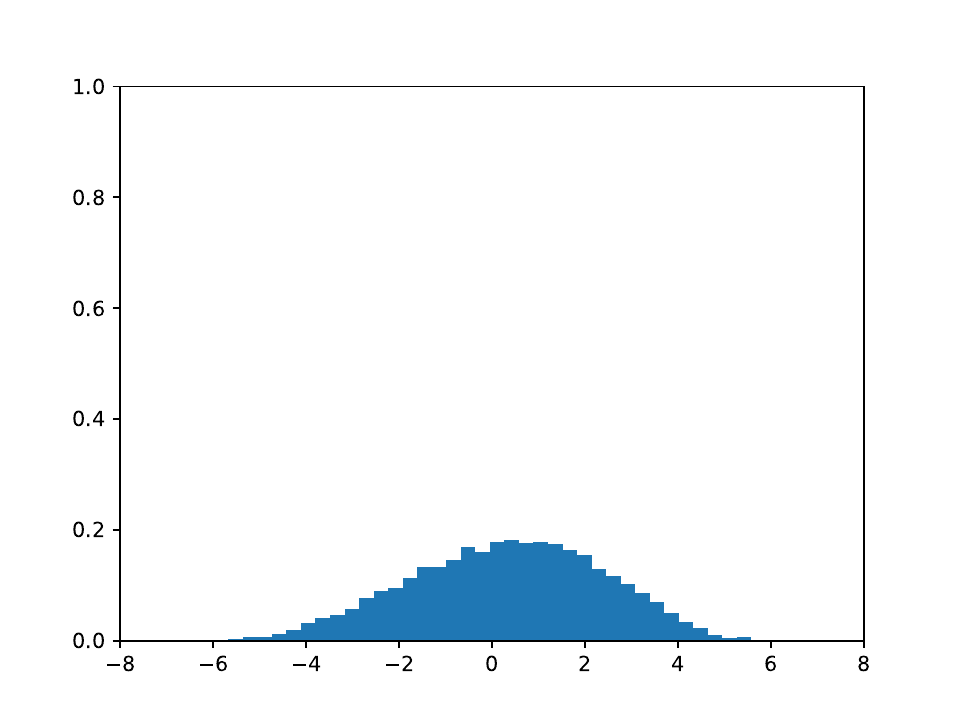}
        \caption{$t=2.4$}
    \end{subfigure}
    ~
    \begin{subfigure}{0.24\textwidth}
        \centering
        \includegraphics[width=0.95\linewidth]{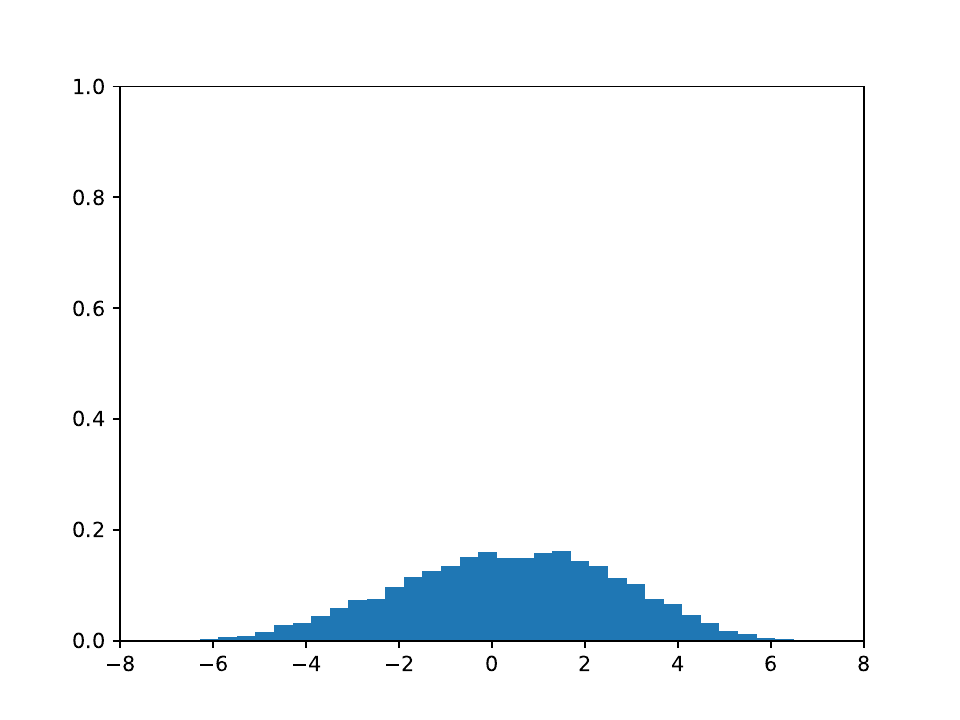}
        \caption{$t=2.7$}
    \end{subfigure}
    ~
    \begin{subfigure}{0.24\textwidth}
        \centering
        \includegraphics[width=0.95\linewidth]{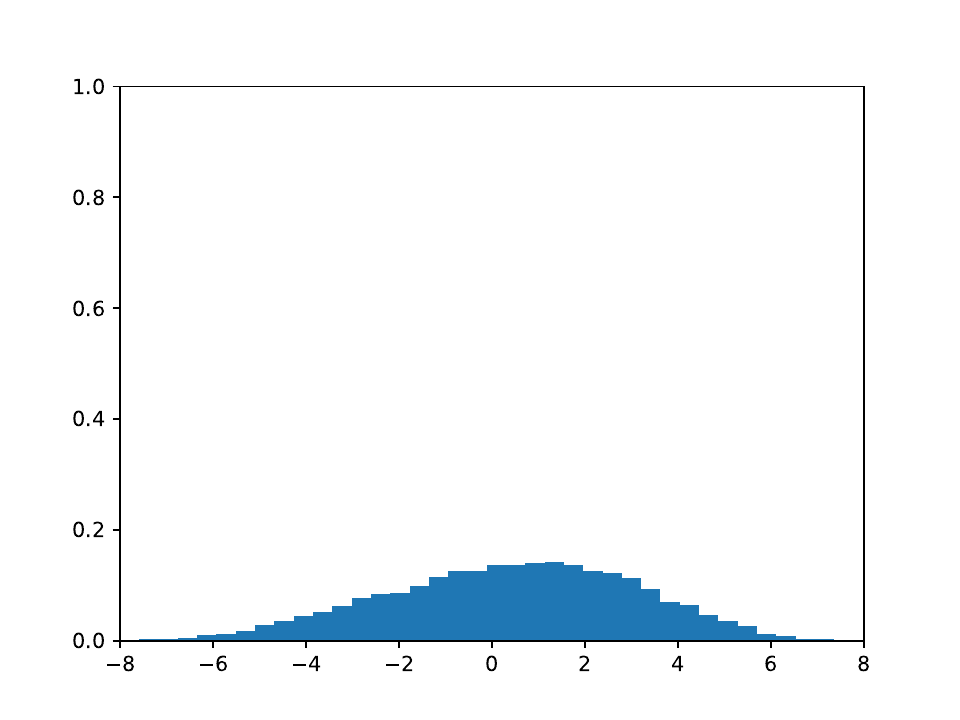}
        \caption{$t=3$}
    \end{subfigure}
    \end{center}   
    \caption{Sample histogram of computed $\rho_{\theta}$ at different time $t$ for $9$D TDSE. }
    \label{9d TDSE sampleplot}
\end{figure*}

In Figure \ref{9d TDSE trajectory plot}, we plot the trajectories of an example triple $(z_1(t),z_2(t),z_3(t))$ where $z_k(t) \in \mathbb{R}^3$ for $k=1,2,3$ and $(z_1(0),z_2(0),z_3(0))$ is randomly sampled from the initial distribution. 
\begin{figure*}[t!]
    \begin{subfigure}{0.24\textwidth}
        \centering
        \includegraphics[width=0.95\linewidth]{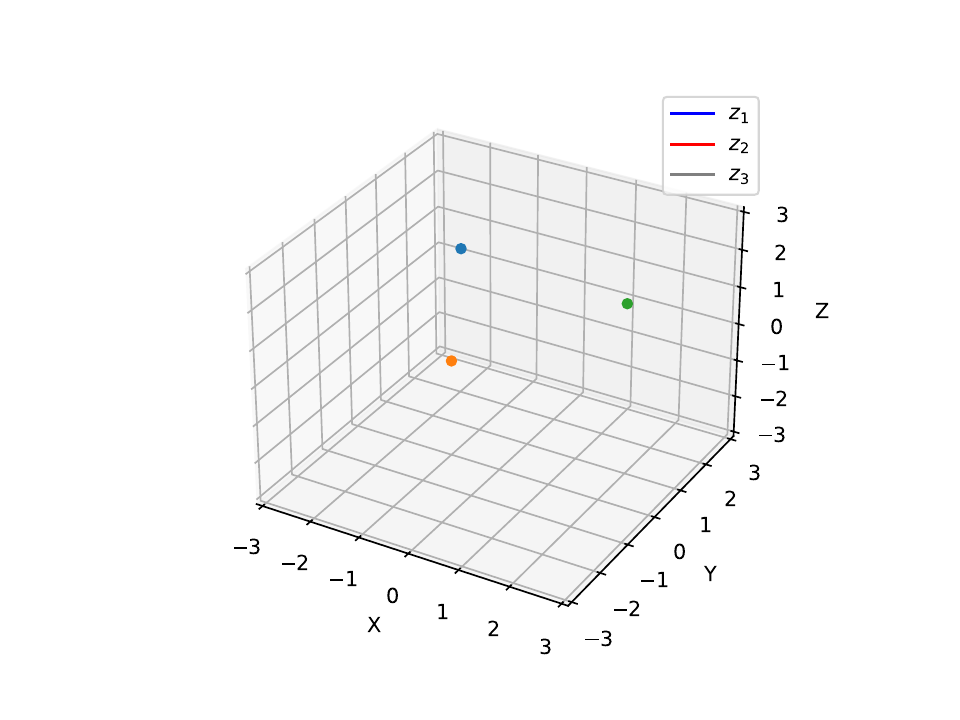}
        \caption{$t=0$}
    \end{subfigure}%
    ~
    \begin{subfigure}{0.24\textwidth}
        \centering
        \includegraphics[width=0.95\linewidth]{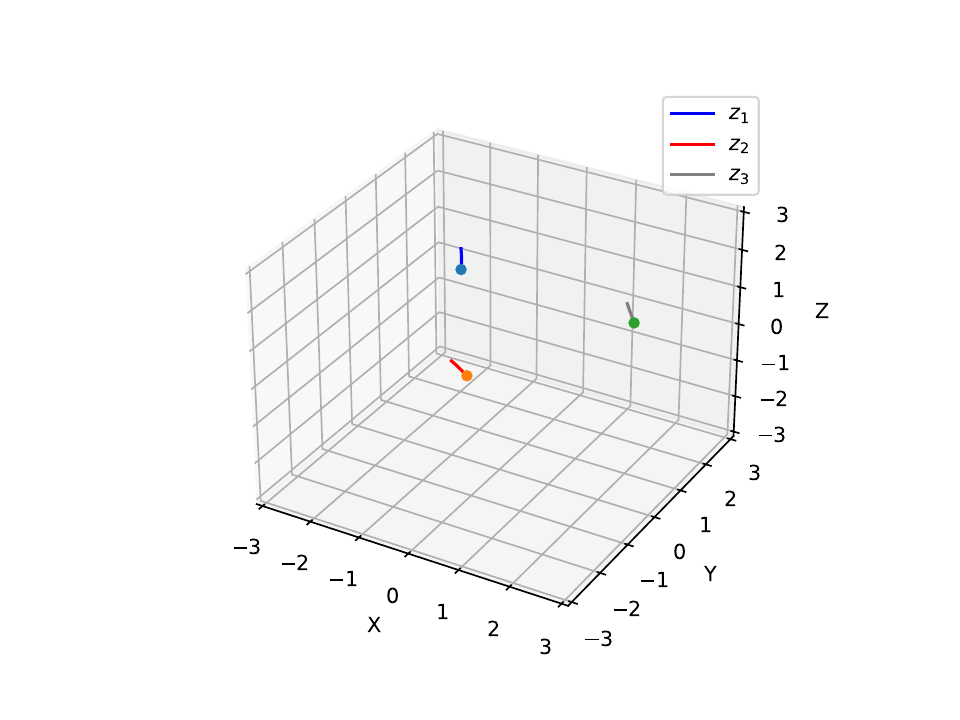}
        \caption{$t=0.3$}
    \end{subfigure}
    ~
    \begin{subfigure}{0.24\textwidth}
        \centering
        \includegraphics[width=0.95\linewidth]{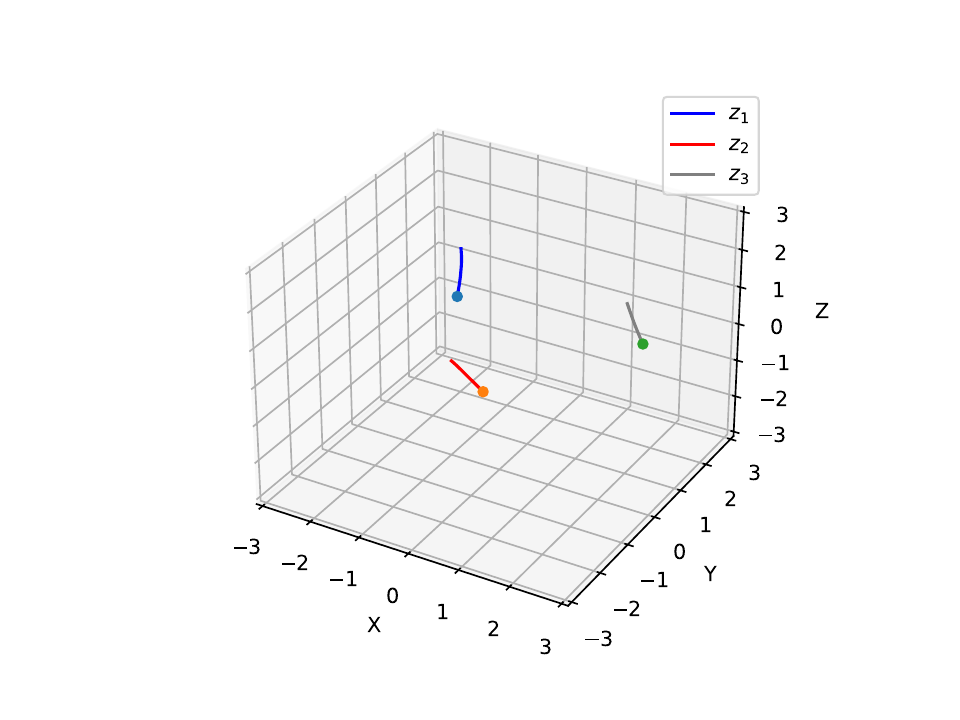}
        \caption{$t=0.6$}
    \end{subfigure}
    ~
    \begin{subfigure}{0.24\textwidth}
        \centering
        \includegraphics[width=0.95\linewidth]{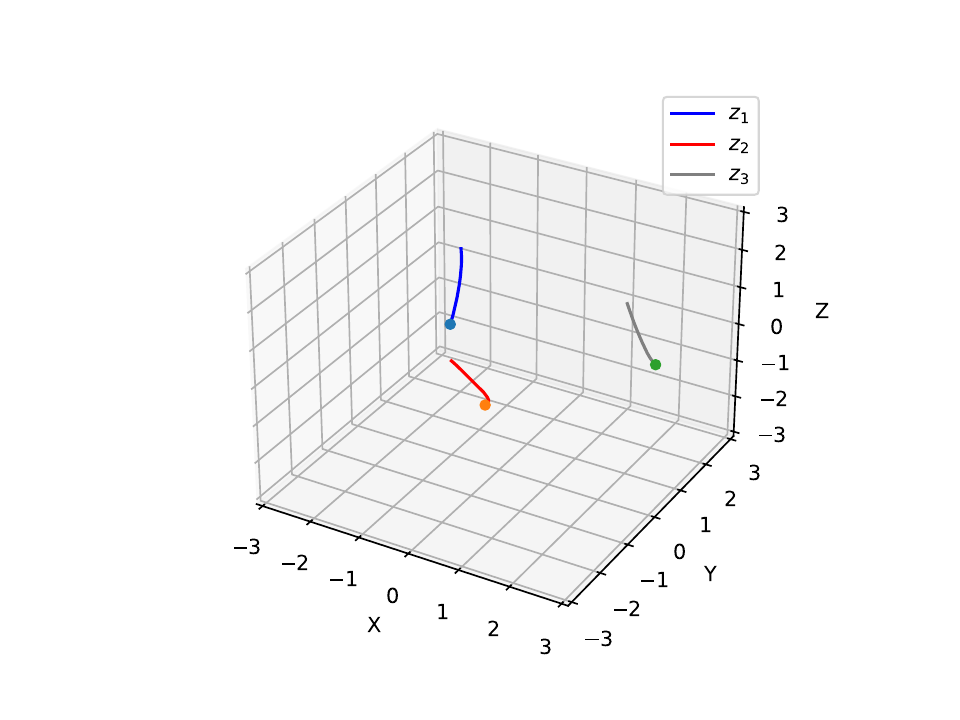}
        \caption{$t=0.9$}
    \end{subfigure}
    \\
    \begin{subfigure}{0.24\textwidth}
        \centering
        \includegraphics[width=0.95\linewidth]{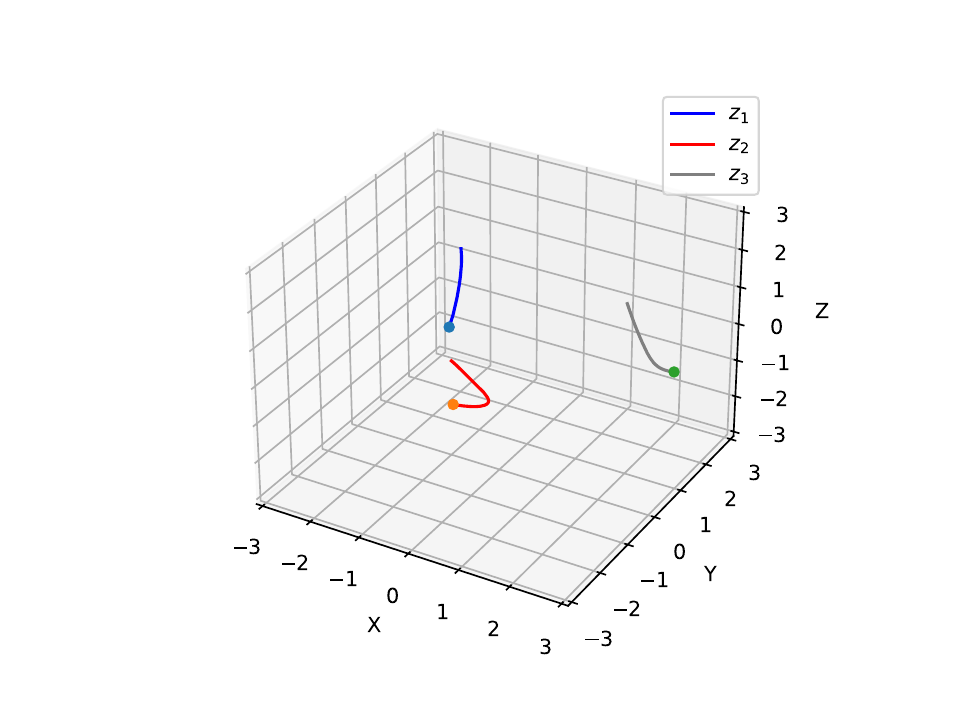}
        \caption{$t=1.2$}
    \end{subfigure}
    ~
    \begin{subfigure}{0.24\textwidth}
        \centering
        \includegraphics[width=0.95\linewidth]{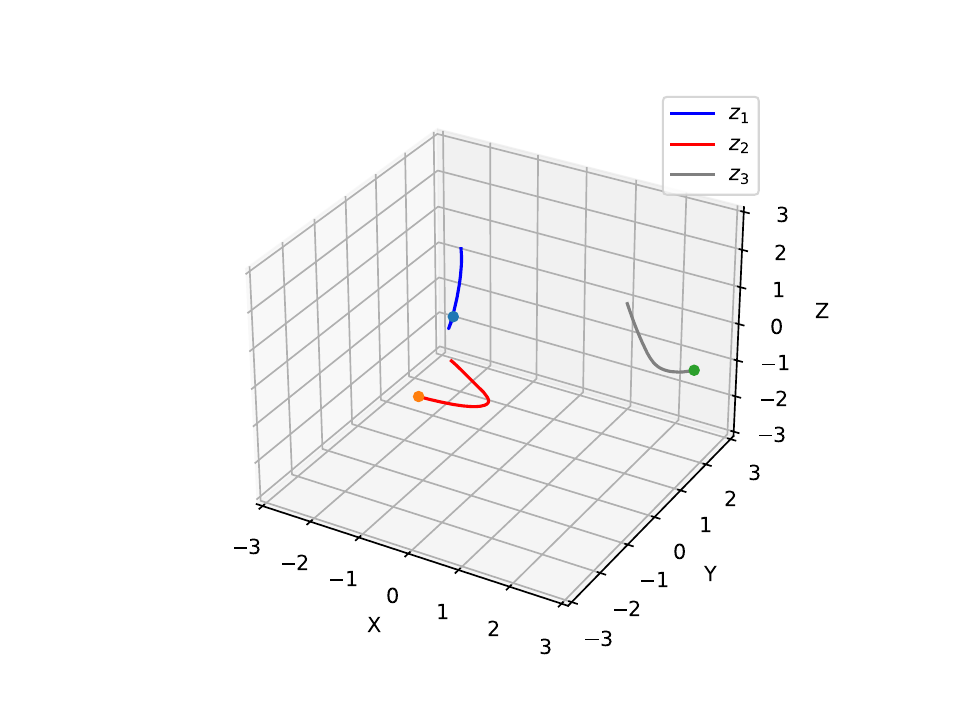}
        \caption{$t=1.5$}
    \end{subfigure}
    ~
    \begin{subfigure}{0.24\textwidth}
        \centering
        \includegraphics[width=0.95\linewidth]{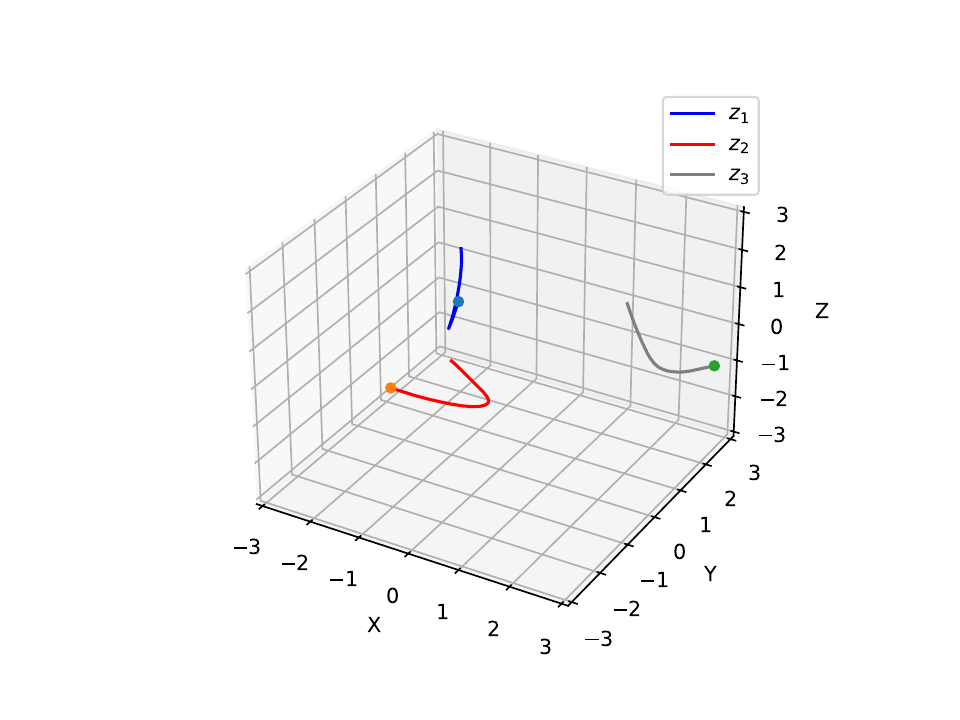}
        \caption{$t=1.8$}
    \end{subfigure}%
    ~
    \begin{subfigure}{0.24\textwidth}
        \centering
        \includegraphics[width=0.95\linewidth]{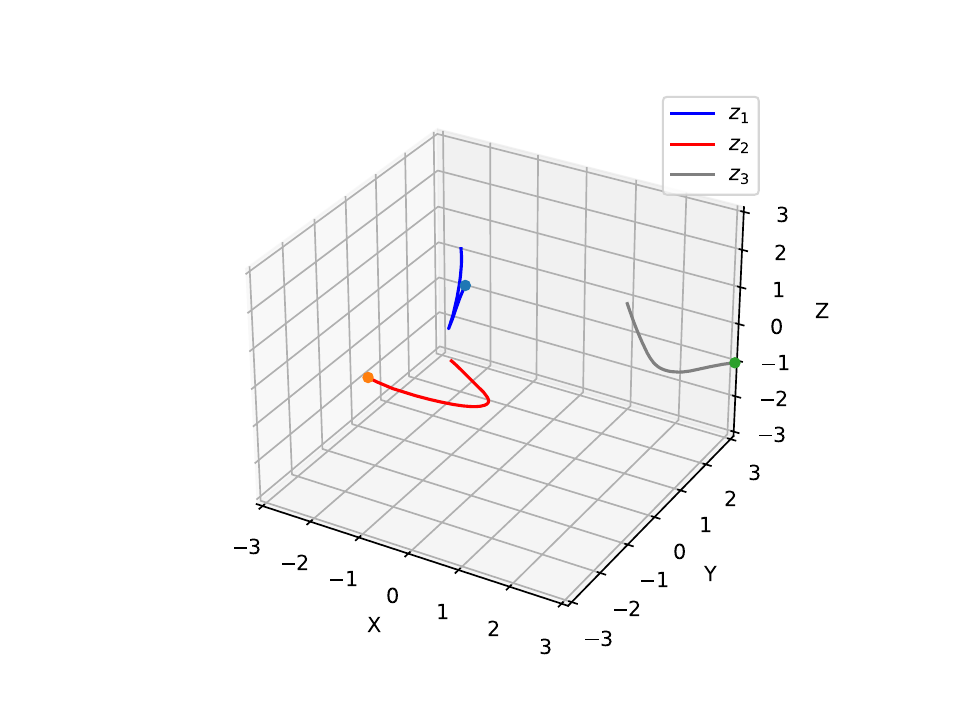}
        \caption{$t=2.1$}
    \end{subfigure}
    
    \caption{Trajectories of an example triple $(z_1(t),z_2(t),z_3(t)) \in \mathbb{R}^9$ with $z_k(t)=(x_{3k-2}(t),x_{3k-1}(t),x_{3k}(t)) \in \mathbb{R}^3$ for $k=1,2,3$ at different time slots $t$ for the interactive system experiment which is a $9$D TDSE. }
    \label{9d TDSE trajectory plot}
\end{figure*}
We also plot in Figure \ref{fig: hamiltonian 1 9d} the time evolution of the kinetic energy (orange), interactive potential $\mathcal{F}_R(\rho)$ (blue), and the Hamiltonian (green) in the simulation. The algorithm preserves the Hamiltonian up to time $t=3$ in this 9D example.

\begin{figure}
    \centering
    \includegraphics[width=0.5\linewidth]{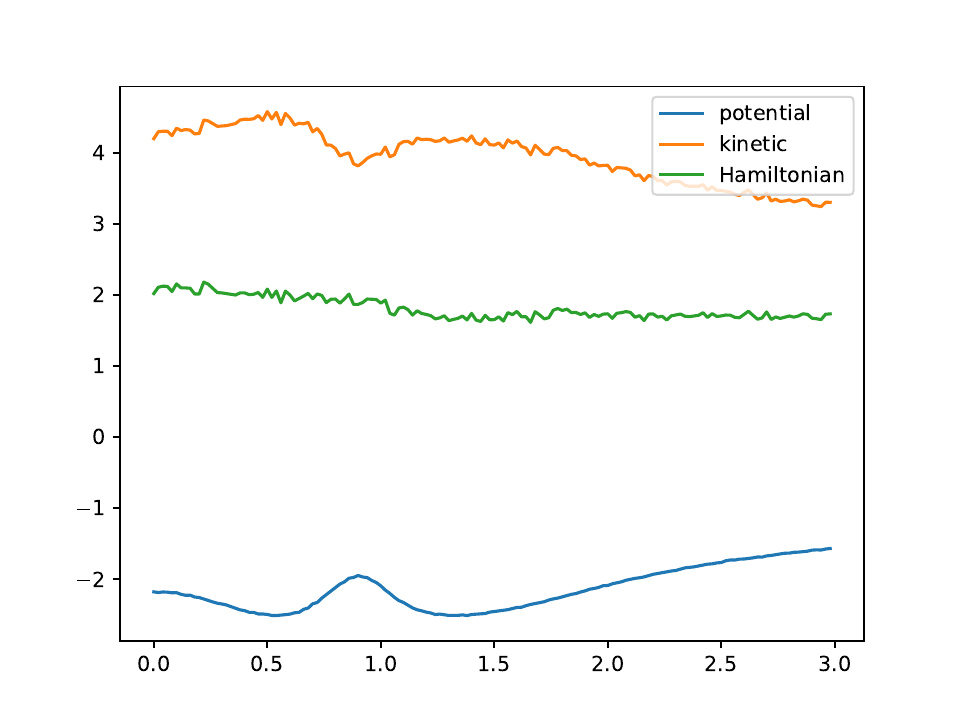}
    \caption{Evolution of the kinetic energy, interactive potential, and Hamiltonian in the numerical simulation for the 9D interactive system example.}
    \label{fig: hamiltonian 1 9d}
\end{figure}

\section{Conclusion} We propose a reformulation of TDSE \eqref{def: TDSE} in terms of push-forward map inspired by strategies developed for the generative model. This is accomplished by viewing TDSE as a Wasserstein Hamiltonian flow in the probability density manifold via the Madelung transform. Unlike the traditional wavefunction formulation of TDSE or Bohmian mechanics, the new formulation describes the dynamics in the space of diffeomorphisms. A main benefit is its convenience in handling the density evolution and the particle dynamics in a single closed system. Using the properties of Neural-ODE, we derived the corresponding equations in the parameter space of the neural network and designed an algorithm to simulate the solutions numerically. The algorithm provides an alternative to the existing methods. Since it works on samples and neural network parameter space, it is computationally friendly to problems in high dimensions. We demonstrated its performance in various problems including a 9D particle system simulation on a desktop. It is also worth mentioning that the simulations are done by the traditional ODE solvers, and there is no data needed to train the neural network. 

Meanwhile, the reformulation of the TDSE in the generative model invites a number of interesting questions. For example, how do we theoretically understand and analyze the dynamics in terms of the push forward map? Can we design new neural network structures to capture the dynamics more efficiently? What is the dependence of computational accuracy on the network structure (including the depth, width, and activation functions) and the sample size used in the calculation of $G$ and $\mathcal{F}$? Can the error be bounded theoretically in Wasserstein metric? Those are among a long list of questions that are worth exploring in future studies. 

\section{Acknowledgments} This research is partially supported by NSF grants DMS-1925263, DMS-2152960, DMS-2307465, DMS-2307466 and DMS-2409868. All authors made equal contributions. The authors would like to thank Aleksei Ustimenko for his valuable suggestions to this manuscript. 

\bibliographystyle{siamplain}
\bibliography{references}

\appendix

\section{Evaluation of Fisher information}\label{sec: evaluation of fi} 

We denote $x=T_{\theta}(z)$ and consider evaluating the parameterized Fisher information:
\begin{align}
    \label{def: parameterized fi}
    F_Q(\theta)=\mathcal{F}_Q(\rho_{\theta})=\frac{1}{8}\int_{\mathbb{R}^d}\lvert \nabla_x \log\ \rho_{\theta}(x)\rvert^2\rho_{\theta}(x)dx=\frac{1}{8}\int_{\mathbb{R}^d}\lvert \nabla_x \log\ \rho_{\theta}(T_{\theta}(z))\rvert^2\lambda(z)dz.
\end{align}
The gradient of log density term needs careful treatment, since $\log \rho_{\theta}\circ T_{\theta}(\cdot)$ is a function of $z$ while the gradient is taken with respect to $x$. The following algorithm is designed to compute \eqref{def: parameterized fi} efficiently with PyTorch:
\begin{algorithm}[H]
\caption{Compute Fisher information}
\label{alg: compute FI}
\begin{algorithmic}\STATE{Define a reference density $\lambda$, and a reversible neural network $T_{\theta}(z)$. 
}
\STATE{Generate samples $\{z_1, \cdots, z_N\}$ from $\lambda$, and compute $x_i=T_{\theta}(z_i)$.}
\STATE{Detach $\{x_i\}_{i=1}^N$ from computational graph and set $\{x_i\}_{i=1}^N$ as leaf variable by enabling requires\_gradient property.}
\STATE{Construct a function $h(x)=\log \rho_{\theta}\circ T_{\theta}\circ T^{-1}_{\theta}(x)$. $h(x)$ is a function of $x=T_{\theta}(z)$ with traced gradient computation.  } 
\STATE{Take the gradient of $\sum_{i=1}^N h(x_i)$ with respect to $\{x_i\}_{i=1}^N$, which gives $\{\nabla _{x_i}h(x_i)=\nabla _{x_i}\log\ \rho_{\theta}(x_i)\}_{i=1}^N$.}
\STATE{Compute the empirical Fisher information $\widehat{F}_Q(\theta)=\frac{1}{8N}\sum_{i=1}^N\lvert \nabla _{x_i}h(x_i)\rvert ^2$.}
\STATE{Compute $\nabla _{\theta}\widehat{F}_Q(\theta)$ using backpropagation.}
\STATE{\textbf{Output:} empirical Fisher information $\widehat{F}_Q(\theta)$ and its gradient $\nabla _{\theta}\widehat{F}_Q(\theta)$.}
\end{algorithmic}
\end{algorithm}


In our experiments, Neural-ODE is used as the push-forward map, since it provides an efficient evaluation of $T^{-1}_{\theta}$ and $\log \rho_{\theta}\circ T_{\theta}(\cdot)$. $T^{-1}_{\theta}$ can be constructed simply by reversing the time evolution of the ODE, and $\log \rho_{\theta}\circ T_{\theta}(\cdot)$ can be computed using the instant change of variables formula in \cite{chen2018neural}. 
\end{document}